\newtheorem{theorem}{Theorem}[section]
\newtheorem{proposition}[theorem]{Proposition}
\newtheorem*{theorem*}{Theorem}
\newtheorem*{proposition*}{Proposition}
\theoremstyle{remark}
\newtheorem{remark}[theorem]{Remark}
\theoremstyle{definition}
\newcommand{\Oc}{\operatorname{\mathcal{O}}} 
\newcommand{\Fd}{\operatorname{\mathbb{F}}} 
\newcommand{\Z}{\operatorname{\mathbb{Z}}} 
\newcommand{\Q}{\operatorname{\mathbb{Q}}} 
\newcommand{\Proj}{\operatorname{\mathbb{P}}} 
\newcommand{\Ql}{\operatorname{\mathbb{Q}_{\ell}}} 
\newcommand{\Qal}{\operatorname{\overline{\mathbb{Q}}}} 
\DeclareMathOperator{\Gal}{Gal} 
\DeclareMathOperator{\Char}{char} 
\newcommand{\Tl}{\ensuremath{T}_{\ell}} 
\newcommand{\Vl}{\ensuremath{V}_{\ell}} 
\newcommand{\Zl}{\operatorname{\mathbb{Z}_{\ell}}} 
\DeclareMathOperator{\Aut}{\operatorname{Aut}} 
\DeclareMathOperator{\Fil}{Fil} 
\newcommand{\MF}{\operatorname{\mathbf{MF}}} 
\newcommand{\Rep}{\operatorname{\mathbf{Rep}}} 
\newcommand{\Hom}{\operatorname{\mathbf{Hom}}} 
\newcommand{\Bcris}{\operatorname{\mathbf{B}_{cris}}} 
\newcommand{\D}{\mathbf{D}} 
\newcommand{\Et}{\ensuremath{\tilde{E}}} 
\newcommand{\Dcris}[1][]{\mathbf{D}^{*}_{\mathrm{cris},\ensuremath{#1}}} 
\DeclareMathOperator{\dst}{\mathbf{dst}} 
\DeclareMathOperator{\detm}{\mathrm{det}} 
\newcommand{\Lg}{\ensuremath{L}^{\mathrm{a}}} 
\newcommand{\Lng}{\ensuremath{L}^{\mathrm{na}}} 
\DeclareMathOperator{\Dc}{\D_{c}} 
\DeclareMathOperator{\Dpc}{\D_{pc}} 
\DeclareMathOperator{\Dpcg}{\D_{pc}^{a}} 
\DeclareMathOperator{\Dpcng}{\D_{pc}^{na}} 
\DeclareMathOperator{\Tr}{Tr} 
\DeclareMathOperator{\Id}{Id} 
\DeclareMathOperator{\W}{\mathbf{W}} 
\renewcommand{\restriction}{\mathord{\upharpoonright}}
\let\temp\phi{}
\let\phi\varphi{}
\let\varphi\temp{}
\title[$3$-adic representations from elliptic curves over $\Q_3$ with potential good reduction]{The $3$-adic representations
arising from elliptic curves over $\Q_3$ with potential good reduction}
\author{Giovanni Bosco}
\subjclass[2020]{11G07, 11F80, 11F85}
\keywords{Galois representations, elliptic curves, $p$-adic Hodge-theory} 
\address{Université de Mons, Département de Mathématiques, 7000 Mons, Belgique}
\email{giovanni.bosco@umons.ac.be}
\begin{document}

\begin{abstract}
    We give a complete classification of all the potentially crystalline $3$-adic representations of the absolute Galois group
    of $\Q_3$ that are isomorphic to the Tate module of an elliptic curve defined over $\Q_3$. These representations are
    described in terms of their associated filtered $(\phi,\Gal({K}/{\Q_3}))$-modules. The most interesting cases occur when
    the potential good reduction is wild.
\end{abstract}

\maketitle

\tableofcontents

\newpage

\section{Introduction}

The $p$-adic representations arising from elliptic curves over $\Q_p$ have been completely described for $p\ge 5$ in~\cite{Vo1}. The goal of this paper is to treat the case of potential good reduction for $p=3$. When $p\ge 5$ potential good reduction is necessarily tame with cyclic inertia. This is not the case anymore for $p=3$, where both wild potential good reduction and non abelian inertia do appear, sometimes simulteanously (see $e=12$).

\vspace{\baselineskip}

Let $\Qal_p$ be an algebraic closure of $\Q_p$ and $G=\Gal({\Qal_p}/{\Q_p})$ its absolute Galois group.
Given an elliptic curve ${E}$ defined over ${\Q_p}$, let $E[p^n]$ denote its group of $p^n$-torsion points with value in $\Qal_p$ and
\[
T_p(E)=\underset{P\mapsto pP}{\varprojlim} E[p^n]
\]
its $p$-adic Tate module. It is a free $\Z_p$-module of rank $2$ with a continuous and linear action of $G$.
The $p$-adic representation of $G$ associated to $E$ (also called Tate module) is
\[
V_p(E)=\Q_p\otimes_{\Z_p}T_p(E).
\]
A $p$-adic representation $V$ of $G$ arises from an elliptic curve over $\Q_p$ if there exists ${E}/{\Q_p}$ such
that $V\simeq V_p(E)$. We wish to classify all $p$-adic representations arising from elliptic curves over $\Q_p$ up to isomorphism for $p=3$ with an additional condition:
the considered elliptic curve has potential good reduction, that is, it acquires good reduction over a finite extension
of $\Q_p$. Such curves have nice geometric properties which are carried over the representations. Indeed, it is well known that the Tate module of an elliptic curve with potential good reduction is potentially crystalline.
Such representations are completely determined --- via the contravariant functor $\mathbf{D}_{\mathrm{pcris}}^{*}$ --- by their
associated filtered $(\phi,\Gal({K}/{\Q_p}))$-module, a purely semilinear object.

\vspace{\baselineskip}

Let ${E}/{\Q_p}$ be an elliptic curve acquiring good reduction over a finite Galois extension ${K}/{\Q_p}$ with maximal unramified
subfield $K_0$ such that its ramification index $e=e({K}/{\Q_p})$ is minimal. Let $\D=(D,\Fil)$ be its associated filtered $(\phi,\Gal({K}/{\Q_p}))$-module,
$D_0$ the subspace of elements fixed by $\Gal({K_0}/{\Q_p})$ and $\phi_0=\phi\restriction_{D_0}$ the $\Q_p$-linear restriction
of $\phi$. We denote by $\W(\D)$ the Weil representation associated to $\D$. It is known that $\D$ satisfies the following
properties:

\vspace{\baselineskip}

\begin{enumerate}[label= (\arabic*)]
	\item $P_{\Char}(\phi_0)(X)=X^2+a_{p}X+p$, with ${\vert a_p\vert}_{\infty}\le 2\sqrt{p}$
	\item $\W(\D)$ is defined over $\Q$
	\item $\bigwedge^2_{K_0}\D=K_0\{-1\}$ (i.e. $\bigwedge_{\Q_p}^2V_p(E)=\Q_p(1)$)
	\item $\D$ is of Hodge-Tate type $(0,1)$.
\end{enumerate}

\vspace{\baselineskip}

These conditions alone are sufficient to guarantee that a $2$-dimensional $p$-adic representation of $G$ comes from an elliptic curve
over $\Q_p$ in the case of tame potential good reduction (see~\cite{Vo1},Thm.5.1.\ or~\cite{Vo2}, \S 5.4). It is not known yet if these are
sufficient in the presence of wild potential good reduction as well, however they are still necessary.
Starting from these conditions and imposing geometric descent datum and a minimal field of good reduction $K$, we provide a list of isomorphism classes of possible
filtered $(\phi,\Gal({K}/{\Q_3}))$-modules. Then we show that every object in the list arises from an elliptic curve over $\Q_3$.

\vspace{\baselineskip}

Some of the classes described in this paper can directly be deduced from the $p\ge 5$ case when $(e,p)=1$ (see~\cite{Vo2}).
To the best of our knowledge, a complete classification
of $\ell$-adic representations ($\ell\neq 3$) --- which is encoded in terms of unfiltered $(\phi,\Gal({K}/{\Q_3}))$-modules --- does not appear in the litterature. However some particular cases can be found (see~\cite{Co} for $e=12$).
Our genuine new results are the cases of wild potential good reduction ($e=3,6$ and $12$) with $e=12$ being the first case
of non abelian inertia. We provide proofs in the tame case for the sake of completeness. The classification is synthetized
in Table~\ref{tab:Table 1}. Notations for the filtered $(\phi,\Gal({K}/{\Q_3}))$-modules and their set of parameters are detailed in section~\ref{S4}.

\vspace{\baselineskip}

\renewcommand{\arraystretch}{1.3}
\begin{table}[h]
\begin{center}
\begin{tabular}{|c|c|c|c|l|c|}
  \hline
  $e$ & Reduction type & $K$ & Frobenius & Filtered $(\phi,\Gal({K}/{\Q_3}))$-module & \#Classes\\
  \hline
  $1$ & Supersingular & $\Q_3$ & $a_3=-3$ & $\Dc(1;-3;0)$ & $1$ \\
  \cline{4-6}
  &  &  & $a_3=0$ & $\Dc(1;0;0)$ & 1 \\
  \cline{4-6}
  &  &  & $a_3=3$ & $\Dc(1;3;0)$ & 1 \\
  \cline{2-6}
  & Ordinary & $\Q_3$ & $a_3=-2$ & $\Dc(1;-2;\alpha)$, $\alpha\in\{0,1\}$ & 2\\
  \cline{4-6}
  & & & $a_3=-1$ & $\Dc(1;-1;\alpha)$, $\alpha\in\{0,1\}$ & 2 \\
  \cline{4-6}
  & & & $a_3=1$ & $\Dc(1;1;\alpha)$, $\alpha\in\{0,1\}$ & 2 \\
  \cline{4-6}
  & & & $a_3=2$ & $\Dc(1;2;\alpha)$, $\alpha\in\{0,1\}$ & 2 \\
  \hline
  $2$ & Supersingular & $\Q_3(\sqrt{3})$ & $a_3=-3$ & $\Dc(2;-3;0)$ & 1\\
  \cline{4-6}
  &  &  & $a_3=0$ & $\Dc(2;0;0)$ & 1 \\
  \cline{4-6}
  &  &  & $a_3=3$ & $\Dc(2;3;0)$ & 1 \\
  \cline{2-6}
  & Ordinary & $\Q_3(\sqrt{3})$ & $a_3=-2$ & $\Dc(2;-2;\alpha)$, $\alpha\in\{0,1\}$ & 2\\
  \cline{4-6}
  & & & $a_3=-1$ & $\Dc(2;-1;\alpha)$, $\alpha\in\{0,1\}$ & 2 \\
  \cline{4-6}
  & & & $a_3=1$ & $\Dc(2;1;\alpha)$, $\alpha\in\{0,1\}$ & 2 \\
  \cline{4-6}
  & & & $a_3=2$ & $\Dc(2;2;\alpha)$, $\alpha\in\{0,1\}$ & 2 \\
  \hline
  $4$ & Supersingular & $\Q_3(\sqrt[4]{3})$ &  $a_3=0$ & $\Dpc(4;0;\alpha), \alpha\in\Proj^1(\Q_3) $ & $\Proj^1(\Q_3)$ \\
  \hline
  $3$ & Supersingular & $\Lng(\zeta_4)$ & $a_3=0$ & $\Dpcng(3;0;\alpha), \alpha\in \mathcal{M}_3^{\mathrm{na}}$ & $\Proj^1(\Q_3)$ \\
  \cline{3-6}
  & & $\Lg=\Q_3(\pi)$ & $a_3=-3$ & $\Dpcg(3;-3,\mu;\pi), \mu\in\{1,2\}$ & 2 \\
  \cline{4-6}
  & & & $a_3=0$ & $\Dpcg(3;0,\mu;\pi), \mu\in\{-1,1\}$ & 2 \\
  \cline{4-6}
  & & & $a_3=3$ & $\Dpcg(3;3,\mu;\pi), \mu\in\{-2,-1\}$ & 2 \\
  \hline
  $6$ & Supersingular & $\Lng(\zeta_4,\sqrt{3})$ & $a_3=0$ & $\Dpcng(6;0;\alpha), \alpha\in \mathcal{M}_6^{\mathrm{na}}$ & $\Proj^1(\Q_3)$ \\
  \cline{3-6}
  & & $\Lg(\sqrt{3})$ & $a_3=-3$ & $\Dpcg(6;-3,\mu;\pi), \mu\in\{1,2\}$ & 2 \\
  \cline{4-6}
  & & & $a_3=0$ & $\Dpcg(6;0,\mu;\pi), \mu\in\{-1,1\}$ & 2 \\
  \cline{4-6}
  & & & $a_3=3$ & $\Dpcg(6;3,\mu;\pi), \mu\in\{-2,-1\}$ & 2 \\
  \hline
  $12$ & Supersingular & $K_1$ & $a_3=0$ & $\Dpc(12;0;1;\epsilon;\alpha), \alpha\in\mathcal{M}_{12}^{1,\epsilon}, \epsilon\in\{0,1\}$ & $\Proj^1(\Q_3)$ \\
  \cline{3-6}
  & & $K_2$ & $a_3=0$ & $\Dpc(12;0;2;\epsilon;\alpha), \alpha\in \mathcal{M}_{12}^{2,\epsilon}, \epsilon\in\{0,1\}$ & $\Proj^1(\Q_3)$ \\
  \cline{3-6}
  & & $K_3$ & $a_3=0$ & $\Dpc(12;0;3;\epsilon;\alpha), \alpha\in \mathcal{M}_{12}^{3,\epsilon}, \epsilon\in\{0,1\}$ & $\Proj^1(\Q_3)$ \\
  \cline{3-6}
  & & $K_4$ & $a_3=0$ & $\Dpc(12;0;4;\epsilon;\alpha), \alpha\in \mathcal{M}_{12}^{4,\epsilon}, \epsilon\in\{0,1\}$ & $\Proj^1(\Q_3)$ \\
  \cline{3-6}
  & & $K_5$ & $a_3=0$ & $\Dpc(12;0;5;\epsilon;\alpha), \alpha\in \mathcal{M}_{12}^{5,\epsilon}, \epsilon\in\{0,1\}$ & $\Proj^1(\Q_3)$ \\
  \hline
\end{tabular}
\end{center}
\caption{\label{tab:Table 1}Isomorphism classes of filtered $(\phi,\Gal({K}/{\Q_3}))$-modules arising from elliptic curves over $\Q_3$ with potential good reduction.}
\end{table}

Note that the supersingular traces $a_3=\pm 3$ occur, which is specific to the $p=3$ case (compared to $p\ge 5$). One may expect that they should appear every time the reduction is supersingular, and yet this is not the case.
The reason behind this absence lies in the structure of the automorphism group of the special fibre, which controls the possible descents. Furthermore, we need to deal with several different fields of good reduction.
Indeed, wild finite extensions of $\Q_3^{\mathrm{un}}$ aren't unique as opposed to the tame ones. This leads to interesting
new phenomena. The case $e=12$ is uniform, the five fields are almost indistinguishable. When $e=3$ the situation is different
between the two possible fields. The non abelian extension occurs for only one possible Frobenius trace and has an infinity of
isomorphism classes. The abelian extension, on the other hand, occurs for every supersingular traces value but has only two classes for each.
Let us finally mention that the ordinary cases have simply disappeared when $e>2$, again a specific feature of elliptic curves
over $\Fd_3$.

\section{Theoretical background}

Let $G_{\Q_p}=\Gal({\Qal_p}/{\Q_p})$ the absolute Galois group of $\Q_p$. We denote by $\Q_p^{\mathrm{un}}$ its maximal unramified
extension and $I_{\Q_p}=\Gal({\Qal_p}/{\Q_p^\mathrm{un}})$ its inertia subgroup.

\subsection{Elliptic curves}

Let ${E}/{\Q_p}$ be an elliptic curve (we refer to~\cite{Si} for the arithmetic of elliptic curves).
One may assume, after a suitable change of coordinates, that the coefficients of a Weierstrass equation of $E$ are in $\Z_p$ and
that the valuation of its discriminant is minimal. A Weierstrass equation satisfying these two properties is called minimal.
Suppose $E$ is given by a minimal Weierstrass equation, reducing each coefficient we obtain a curve ${\Et}/{\Fd_p}$.
The reduced curve need not be an elliptic curve itself, in fact it will be if and only if
$v(\Delta)=0$ (i.e. $\Delta(\Et)=\Delta\bmod p\Z_p\neq 0$). When the reduced curve is an elliptic curve we say that $E$ has
good reduction. Let ${L}/{\Q_p}$ be a finite extension of $\Q_p$ and consider $E_L=E\times_{\Q_p}L$ the extension of $E$ to $L$.
Allowing changes of coordinates defined over $L$ may give us a minimal model of $E_L$ with $v(\Delta_L)=0$, so that $E_L$ has
good reduction. When there exists such an extension we say that $E$ has potential good reduction. This property only depends
on the action of inertia, which means we can choose ${L}/{\Q_p}$ to be totally ramified so ${\Et_L}/{\Fd_p}$. Denote
by $a_p(E)=a_p(\Et_L)$ the trace of the characteristic polynomial of the Frobenius endomorphism acting on $\Vl(\Et_L)$
for some $\ell\neq p$. It is known that $a_p(E)$ is an integer independent of $\ell$
satisfying $\vert a_p(E)\vert_{\infty}\le 2\sqrt{p}$ as well as an invariant of the isogeny class of $\Et_L$ over $\Fd_p$
(see~\cite{Ho-Ta}). Furthermore we have the following relation:
\[
a_p(\Et_L)=p+1-\# \Et_L(\Fd_p).
\]
We say that $\Et_L$ is ordinary when $(p,a_p(\Et_L))=1$, supersingular when $p\mid a_p(\Et_L)$.

\subsection{$\ell$-adic Galois representations}

Let $p,\ell$ be distinct prime numbers. An $\ell$-adic representation of $G_{\Q_p}$
(or $\Ql[G_{\Q_p}]$-module) is a finite dimensional $\Ql$-vector space with a linear and continuous action of $G_{\Q_p}$.
We denote such an object by $(V,\rho_{\ell})$ where $V$ is a $\Ql$-vector space
and $\rho_{\ell}:G_{\Q_p}\longrightarrow \Aut_{\Ql}(V)$ the group homomorphism describing the action.
If the inertia subgroup $I_{\Q_p}$ of $G_{\Q_p}$ acts trivially on $V$ we say that the representation has good reduction.
In this case it factors into a representation of the absolute Galois group $G_{\Fd_p}$ of $\Fd_p$ and is completely determined
by it. When there exists a finite extension ${L}/{\Q_p}$ such that $I_L$ acts trivially on $V$ we say that the representation
has potential good reduction. One easily checks that having potential good reduction is equivalent to $\rho_{\ell}(I_L)$
being finite. Let ${E}/{\Q_p}$ be an elliptic curve, the group $G_{\Q_p}$ acts on $E(\Qal_p)$ by acting on the coefficients of its 
points.
Since addition is $G_{\Q_p}$-equivariant, the group of $n$-torsion points $E[n]$ of $E(\Qal_p)$ is stable by action of $G_{\Q_p}$ 
and we define the $\ell$-adic Tate module associated to $E$ by
\[
\Tl(E)=\varprojlim_{P\mapsto\ell P} E[\ell^n].
\]
It is a free $\Zl$-module of rank $2$ equipped with a continuous and $\Zl$-linear action of $G_{\Q_p}$.
Tensoring by $\Ql$ we get $\Vl(E)=\Ql\otimes\Tl(E)$, an $\ell$-adic representation of $G_{\Q_p}$. It is well known
that $\Vl(E)$ has (potential) good reduction if and only if $E$ has (potential) good reduction. If ${E}/{\Q_p}$ is an elliptic curve
with potential good reduction, there exists a unique finite extension $M_{E}/{\Q_p}^{\mathrm{un}}$ of minimal degree over
which $E$ acquires good reduction. We call that minimal degree the semi-stability defect of $E$, denoted by $\dst(E)$.
Consider $\Vl(E)$ the $\ell$-adic representation associated to $E$, since $E$ has potential good reduction there
exists ${L}/{\Q_p}$ finite of minimal ramification index satisfying $\rho_{E,\ell}(I_L)=0$, it is then easy to see that
\[
M_E=\Qal_p^{\ker(\rho_{E,\ell}\restriction_{I_L})}.
\]
If ${L}/{\Q_p}$ is a finite extension with $L^{\mathrm{un}}=M_E$ then $E$ acquires good reduction over $L$ and $\dst(E)=e({L}/{\Q_p})$,
it is the minimal ramification index among all good reduction fields of $E$. It is also worth noticing that
if $L,{L^{\prime}}/{\Q_p}$ satisfy $L^{\mathrm{un}}={(L^{\prime})}^{\mathrm{un}}$, then they are interchangeable in the sense
that ${E}/{\Q_p}$ acquires good reduction over $L$ if and only if it acquires good reduction over $L^{\prime}$. Furthermore,
we know that $\dst(E)\in\{1,2,3,4,6,12\}$ and $\rho_{E,\ell}(I_{\Q_p})$ is either a cyclic group of order $1,2,3,4,6$ or the
non Abelian semi-direct product of a cyclic group of order $4$ by a group of order $3$ (see~\cite{Se1},\S 5.6). The degree of a
minimal good reduction field is bounded by the image of inertia and the structure of its inertia subgroup is known.

\subsection{Filtered $(\phi,\Gal({K}/{\Q_p}))$-modules}

Let ${K}/{\Q_p}$ be a finite Galois extension, $K_0$ the maximal unramified extension of $\Q_p$ inside $K$ and $G_K=\Gal(\Qal_p/K)$
its absolute Galois group. Denote by $\sigma$ the absolute Frobenius on $K_0$. A filtered
$(\phi,\Gal({K}/{\Q_p}))$-module $\D=(D,\Fil)$ is a finite dimensional $K_0$-vector space $D$ together with:
    \begin{enumerate}[label= (\roman*)]
        \item a $\sigma$-semilinear action of $\Gal({K}/{\Q_p})$
        \item a $\sigma$-semilinear, $\Gal({K}/{\Q_p})$-equivariant and bijective Frobenius $\phi:D\tilde{\longrightarrow}D$
        \item a filtration $\Fil={(\Fil^{i}D_K)}_{i\in\Z}$ on $D_K=K\otimes_{K_0}D$ by $\Gal({K}/{\Q_p})$-stable subspaces
        such that $\Fil^{i}D_K=D_K$ for $i\ll 0$ and $\Fil^{i}D_K=0$ for $i\gg 0$.
    \end{enumerate}

Such objects form a category we will denote by $\MF_{\phi}(G_{\Q_p})$. The morphisms are the $K_0$-linear maps $f$ commuting
to the Frobenius and the action of $\Gal({K}/{\Q_p})$ as well as preserving the filtration
(i.e. $f_K(\Fil^{i}D_K)\subseteq\Fil^{i}D^{\prime}_K$). The Tate twist $\D\{-1\}$ of $\D$ is the $K_0$-vector space $D$ with
the same action of $\Gal({K}/{\Q_p})$, $\phi\{-1\}=p\phi$ and $\Fil^i{(D\{-1\})}_K=\Fil^{i-1}D_K$. We say that $\D$ is
of Hodge-Tate type $(0,1)$ if $\Fil^{i}D_K=D_K$ for $i\le 0$, $\Fil^{i}D_K=0$ for $i\ge 2$ and $\Fil^1D_K$ is
a non trivial subspace of $D_K$. We associate to $\D$ the following quantities:
\begin{enumerate}
    \item $t_{N}(\D)=v_{p}(\det\phi)$
    \item $t_{H}(\D)=\underset{i\in\Z}{\sum}i\dim_{K}(\Fil^{i}D_K/\Fil^{i+1}D_K)$,
\end{enumerate}
where $\det\phi$ is the determinant of a matrix representing $\phi$. We say that $\D$ is admissible if $t_{N}(\D)=t_{H}(\D)$ and for every subobject $\D^{\prime}$ of $\D$, $t_{H}(\D^{\prime})\le t_{N}(\D^{\prime})$.
Let $V$ be a $p$-adic representation of $G_{\Q_p}$, one can associate to $V$ a
filtered $(\phi,\Gal({K}/{\Q_p}))$-module via the contravariant functor:
\[
\Dcris[K]:\Rep_{\Q_p}(G_{\Q_p}) \longrightarrow \MF_{\phi}(\Gal({K}/{\Q_p})): V\longmapsto \Hom_{\Q_p[G_K]}(V,\Bcris).
\]
Where $\Bcris$ is the crystalline period ring (see~\cite{Fo1}).
The inequality $\dim_{\Q_p}V\le\dim_{K_0}\Dcris[K](V)$ is always satisfied, and we say that a representation $V$ of $G_{\Q_p}$ is
crystalline over $K$ if the equality holds. Viewing $V$ as a representation of $G_K$ by restriction, then $V$ is
potentially crystalline over $K$ as a representation of $G_{\Q_p}$ if and only if it is crystalline as a representation
of $G_K$. This functor establishes an anti-equivalence of categories between the category of $p$-adic   representations of
$G_{\Q_p}$ crystalline over $K$ and the category of admissible filtered $(\phi,\Gal({K}/{\Q_p}))$-modules (see~\cite{Fo1}). 
The $p$-adic Tate modules of elliptic curves over $\Q_p$ with potential good reduction give rise to such representations. In fact,
the following holds:
\begin{theorem*}[\cite{Co-Io}, Thm.4.7]
Let ${E}/{\Q_p}$ be an elliptic curve,
the $p$-adic representation $V_p(E)$ is (potentially) crystalline if and only if $E$ has (potential) good reduction. 
\end{theorem*}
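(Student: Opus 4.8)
The plan is to prove the two directions separately, invoking the Néron–Ogg–Shafarevich criterion on the geometric side and Fontaine's theory on the $p$-adic side, and then matching them through an auxiliary $\ell$-adic comparison. Recall that a $p$-adic representation $V$ of $G_{\Q_p}$ is crystalline over $K$ if and only if $\dim_{\Q_p}V = \dim_{K_0}\Dcris[K](V)$, and potentially crystalline if this holds for some finite Galois $K/\Q_p$; moreover $V$ is crystalline over $K$ iff it is crystalline as a $G_K$-representation.

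First I would treat the implication ``potential good reduction $\Rightarrow$ potentially crystalline''. By definition, $E$ acquires good reduction over some finite extension, and as noted in the excerpt one may take this extension $L/\Q_p$ totally ramified; let $K$ be its Galois closure. Over $K$ the curve $E_K$ has good reduction, so $V_p(E)$ restricted to $G_K$ is the Tate module of an abelian variety (an elliptic curve) with good reduction over $K$. The classical result of Fontaine (that the $p$-adic Tate module of an abelian variety with good reduction over a $p$-adic field is crystalline — equivalently, the $p$-divisible group it comes from is crystalline, cf.\ the crystalline comparison for $p$-divisible groups / abelian schemes) then gives that $V_p(E)|_{G_K}$ is crystalline, hence $V_p(E)$ is crystalline over $K$, i.e.\ potentially crystalline. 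For the genuinely ``good reduction'' (non-potential) statement one takes $K = \Q_p$: good reduction of $E$ yields a $p$-divisible group over $\Z_p$ whose generic fibre is $V_p(E)$, and Fontaine–Laffaille / Fontaine's theorem gives crystallinity directly.

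For the converse, ``potentially crystalline $\Rightarrow$ potential good reduction'', I would argue contrapositively, or equivalently pass to the $\ell$-adic picture and use the Néron–Ogg–Shafarevich criterion. Suppose $V_p(E)$ is potentially crystalline, crystalline over a finite Galois $K/\Q_p$; then $V_p(E)$ restricted to $G_K$ is crystalline, in particular de Rham, in particular Hodge–Tate, and the inertia action $\rho_{E,p}(I_K)$ is trivial because a crystalline representation is unramified after restriction to the relevant field only when... — more precisely, one uses that a crystalline $G_K$-representation has potentially good reduction in the sense that the associated $\ell$-adic representation (for $\ell\neq p$, built from the same elliptic curve) has finite inertia image. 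The clean route: crystallinity of $V_p(E)$ over $K$ forces $E_K$ to have good reduction by the theorem of Coleman–Iovita/Fontaine on $p$-divisible groups — a crystalline $G_K$-representation of Hodge–Tate weights $\{0,1\}$ and dimension $2$ with the right determinant comes from a $p$-divisible group over $\Oc_K$, which for an elliptic curve forces $E_K$ to have good reduction over $K$ (the Néron model has good reduction because its $p$-divisible group extends); hence $E$ has potential good reduction.

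The main obstacle — and the reason this is cited rather than reproved — is the hard direction just sketched: deducing good reduction of the elliptic curve from crystallinity of its Tate module. This requires the full strength of Fontaine's classification relating crystalline representations to $p$-divisible groups (Breuil–Kisin / Fontaine's ``mysterious functor'' results), together with the fact that a $p$-divisible group over $\Oc_K$ whose generic fibre is $E_K[p^\infty]$ forces the Néron model of $E_K$ to be an abelian scheme (via Grothendieck's semistable reduction theorem and the orthogonality of the toric and abelian parts). In a self-contained write-up one would either cite \cite{Co-Io}, Thm.~4.7 directly as the excerpt does, or assemble: (i) Néron–Ogg–Shafarevich for the $\ell$-adic side, (ii) Fontaine's theorem ``good reduction $\Leftrightarrow$ crystalline'' for abelian varieties over $p$-adic fields, and (iii) the compatibility of the inertia images across all primes $\ell$ (including $\ell = p$) which is what lets the $p$-adic condition control the geometry. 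Since all three are standard and the statement is explicitly quoted from \cite{Co-Io}, the honest ``proof'' here is the reduction to those inputs rather than a fresh argument.
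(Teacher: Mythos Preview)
The paper does not prove this statement at all: it is quoted verbatim as a theorem of Coleman--Iovita with the citation \cite{Co-Io}, Thm.~4.7, and no argument is supplied. There is therefore nothing in the paper to compare your proposal against.

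As for the content of your sketch: the forward direction (good reduction $\Rightarrow$ crystalline) is correctly attributed to Fontaine and is clean. The converse direction is where the real work lies, and your write-up wanders --- you start with a contrapositive via $\ell$-adic Néron--Ogg--Shafarevich, abandon it mid-sentence, then switch to the correct line (crystalline with Hodge--Tate weights $\{0,1\}$ $\Rightarrow$ comes from a $p$-divisible group over $\Oc_K$ $\Rightarrow$ the Néron model is an abelian scheme). That second route is the right one, but note that the ``$p$-divisible group extends $\Rightarrow$ good reduction'' step is itself a nontrivial theorem (essentially Grothendieck's criterion for good reduction in terms of the $p$-divisible group, SGA~7). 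Your closing paragraph is honest about this: the statement is a black box here, and the appropriate response in this paper's context is simply to cite \cite{Co-Io} --- which is exactly what the paper does.
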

Each filtered $(\phi,\Gal({K}/{\Q_p}))$-module has a linear object naturally attached to it, namely its Weil representation.
Recall that the Weil group of $\Qal_p$ is defined by the short exact sequence
\[
1\longrightarrow I_{\Q_p}\longrightarrow W_{\Q_p} \overset{\nu}{\longrightarrow} \Z\longrightarrow 1
\]
and we let $W_K=G_K\cap W_{\Q_p}$. To every $(\phi,\Gal({K}/{\Q_p}))$-module $\D$ we can associate
a $K_0$-vector space $\Delta$ with a continuous $K_0$-linear action of $W_{\Q_p}$ in the following way:
\[
\rho: W_{\Q_p}\longrightarrow \Aut_{K_0}(\Delta): \omega\longmapsto (\omega\bmod W_K)\phi^{-\nu(\omega)}
\]
where $\Delta$ is the underlying $K_0$-vector space of $\D$. The pair $\W(\D)=(\Delta,\rho)$ is called a Weil representation.
It is defined over $\Q$ if $\Tr(\rho(w))\in\Q$ for every $w\in W_{\Q_p}$.

\section{Strategy}

We begin by fixing a semi-stability defect $e\in\{ 1,2,3,4,6,12\}$. The first step is to determine every finite Galois extension
with ramification index $e$ that arises as a field of good reduction of some elliptic curve defined over $\Q_3$. Cases $e=1,2$ and $4$ are tame, hence necessarily given by $\Q_3(\sqrt[e]{3})$. For $e=3$ we use Local Class Field Theory and the local fields database in~\cite{LMFDB}, $e=6$ is then obtained by a ramified quadratic twist. Finally, $e=12$ is treated using~\cite{LMFDB} since the structure of the Galois group and inertia subgroup are well known.
We then fix ${K}/{\Q_3}$ to be one such extension. The next step is to describe the list of the $2$-dimensional filtered
$(\phi,\Gal ({K}/{\Q_3}))$-modules $\D$ satisfying properties $(1)-(4)$. We then show that given an elliptic curve ${E}/{\Q_3}$ with
potential good reduction over $K$, its associated filtered $(\phi,\Gal ({K}/{\Q_3}))$-module $\Dcris[K] (V_3 (E))$ is necessarily
isomorphic to one object $\D$ of our list. Finally, given an object $\D$ in the list, we need to find an elliptic curve ${E}/{\Q_3}$
such that
\[
    \Dcris[K] (V_3 (E))\simeq\D,
\]
this is done in section~\ref{S5}.

One last point require some discussion. Given an unfiltered $2$-dimensional $(\phi,\Gal ({K}/{\Q_3}))$-module $\D$,
the set of Hodge-Tate type $(0,1)$ filtrations on $\D$ is in bijection with $\Proj^1(\Q_3)$. Indeed, by Galois Descent, it is easy to check that the $\Gal ({K}/{\Q_3})$-stable lines in $D_K=K\otimes_{K_0}D$ are in bijection with the lines in
\[
D_K^{\Gal({K}/{\Q_3})}=\{ x\in D_K\vert \forall g \in \Gal({K}/{\Q_3}),g.x=x\}
\]
a $2$-dimensional $\Q_3$-vector space. This means that if $\phi$ has only trivial stable subspaces, there are infinitely
many admissible filtrations on $\D$. In the following we will define sets that parameter our filtrations. This
fact ensures that these sets will always be non empty, even though it could not be clear at first glance.

\section{Classification}\label{S4}

We provide the list of admissible filtered $(\phi,\Gal({K}/{\Q_3}))$-modules satisfying our geometric conditions:

\begin{enumerate}[label= (\arabic*)]
	\item $P_{\Char}(\phi_0)(X)=X^2+a_{p}X+p$, with ${\vert a_p\vert}_{\infty}\le 2\sqrt{p}$
	\item $\W(\D)$ is defined over $\Q$
	\item $\bigwedge^2_{K_0}\D=K_0\{-1\}$ (i.e. $\bigwedge_{\Q_3}^2V_3(E)=\Q_3(1)$)
	\item $\D$ is of Hodge-Tate type $(0,1)$.
\end{enumerate}

with ${K}/{\Q_3}$ a
minimal Galois extension of good reduction. We then show that every elliptic curve defined over $\Q_3$ with potential
good reduction has associated filtered $(\phi,\Gal({K}/{\Q_3}))$-module isomorphic to an object of the list.

\subsection{\for{toc}{The crystalline case}\except{toc}{The crystalline case ($e=1$)}}

We start our classification with the representations coming from elliptic curves ${E}/{\Q_3}$ with good reduction ($K=\Q_3$).
There are two distinct cases behaving differently depending on the trace $a_3(\Et)$ of the Frobenius of ${\Et}/{\Fd_3}$.

\subsubsection{The supersingular case}

Let $a\in\{-3,0,3\}$ and $\alpha\in\Proj^1(\Q_3)$. We denote by $\Dc(1;a;\alpha)$ the filtered $\phi$-module
(of Hodge-Tate type $(0,1)$) defined by:
\begin{itemize}
	\item $D=\Q_3 e_1\oplus\Q_3 e_2$
	\item $M_B(\phi)=\begin{pmatrix} 0 & -3 \\ 1 & -a \end{pmatrix}$, where $B=(e_1,e_2)$
	\item $\Fil^1D=(\alpha e_1 + e_2)\Q_3$.
\end{itemize}
Identifying $\Proj^1(\Q_3)$ with $\Q_3\sqcup\{\infty\}$, we let $\alpha e_1 +e_2=e_1$ when $\alpha=\infty$. For each $a\in\{-3,0,3\}$ and each $\alpha\in\Proj^1(\Q_3)$, the filtered $\phi$-module $\Dc(1;a;\alpha)$ satisfies conditions $(1)-(4)$ and is admissible. Condition $(1)$ is obvious and $(4)$ is satisfied by definition. Conditions $(2)$ and $(3)$ as well as admissibility are easily checked by computation.

\begin{proposition}
Let ${E}/{\Q_3}$ be an elliptic curve with good reduction such that $a_3=a_3(\Et)\in\{-3,0,3\}$ and $\D=\Dcris[\Q_3](V_3(E))$.
There exists an isomorphism of filtered $\phi$-modules between $\D$ and $\Dc(1;a_3;0)$. Moreover, if $a,b\in\{ -3,0,3\}$
then $\Dc(1;a;0)$ and $\Dc(1;b;0)$ are isomorphic if and only if $a=b$.
\end{proposition}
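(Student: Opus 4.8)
The plan is to identify the filtered $\phi$-module $\D=\Dcris[\Q_3](V_3(E))$ directly from the known structure of the crystalline Dieudonné module attached to an elliptic curve with good reduction, and then to pin down the filtration using admissibility. First I would recall that since $E$ has good reduction over $\Q_3$, the representation $V_3(E)$ is crystalline (by the cited theorem of Conrad–Iovita), so $\D$ is a $2$-dimensional admissible filtered $\phi$-module over $\Q_3$, and its $\phi$ is related to the Frobenius on the reduction: the characteristic polynomial of $\phi$ is $X^2+a_3X+3$ by property $(1)$, where $a_3=a_3(\Et)$. In the supersingular case $a_3\in\{-3,0,3\}$ the polynomial $X^2+a_3X+3$ has no root in $\Q_3$ (one checks the valuations/Newton polygon: the roots have valuation $1/2$ when $a_3=0$, and for $a_3=\pm3$ the polynomial is still irreducible over $\Q_3$), so $\phi$ has no nonzero proper stable $\Q_3$-subspace. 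Hence $D$ is a free module of rank one over $\Q_3[\phi]$, and choosing $e_2$ a generator and $e_1=\phi(e_2)$ — wait, more carefully: choosing any $e_2\neq 0$ and setting $e_1$ so that the matrix of $\phi$ in the basis $(e_1,e_2)$ is the companion-type matrix $\begin{pmatrix}0&-3\\1&-a_3\end{pmatrix}$ — this is possible precisely because $\phi$ is cyclic with this characteristic polynomial. This shows $\D$ is isomorphic, as a $\phi$-module, to the underlying $\phi$-module of $\Dc(1;a_3;\alpha)$ for some $\alpha$.

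Next I would handle the filtration. The module $\D$ is of Hodge-Tate type $(0,1)$ (property $(4)$: this is because $E$ has Hodge–Tate weights $0,1$), so $\Fil^1D$ is a line in $D=\Q_3e_1\oplus\Q_3e_2$, say spanned by $\alpha e_1+e_2$ or by $e_1$. Admissibility of $\D$ forces $t_H(\D)=t_N(\D)$, which holds automatically here ($t_N=v_3(\det\phi)=v_3(3)=1$ and $t_H=1$), and for every $\phi$-stable subobject $\D'$ we need $t_H(\D')\le t_N(\D')$; but since $\phi$ has no nonzero proper stable subspace, there are no nontrivial subobjects to check, so every choice of line is admissible. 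The key point is then that all these filtered $\phi$-modules $\Dc(1;a_3;\alpha)$, as $\alpha$ ranges over $\Proj^1(\Q_3)$, are mutually isomorphic: an isomorphism must be a $\Q_3$-linear automorphism of $D$ commuting with $\phi$, and since $\phi$ is cyclic the commutant of $\phi$ is exactly $\Q_3[\phi]$, a field (the quadratic extension cut out by $X^2+a_3X+3$); its nonzero elements act transitively on the set of lines in $D$ (a $1$-dimensional vector space over that field), so any line can be moved to $\Fil^1D=(\alpha e_1+e_2)\Q_3$ with $\alpha=0$, i.e. to $\Q_3 e_2$. This gives $\D\simeq\Dc(1;a_3;0)$.

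Finally, for the ``only if'' direction: if $\Dc(1;a;0)\simeq\Dc(1;b;0)$ then in particular their $\phi$-modules are isomorphic, hence $\phi$ has the same characteristic polynomial $X^2+aX+3=X^2+bX+3$, forcing $a=b$; conversely $a=b$ gives literally the same object. I expect the main obstacle to be the careful verification that the commutant of a cyclic $\phi$ is a field acting transitively on lines — i.e. checking in each of the three cases $a_3\in\{-3,0,3\}$ that $X^2+a_3X+3$ is irreducible over $\Q_3$ (for $a_3=0$ this is Eisenstein; for $a_3=\pm 3$ one argues that a rational root would have to be a $3$-adic unit times a power of $3$ with the product of roots equal to $3$ and sum $\mp 3$, which is impossible since then both roots would have valuation $\geq 0$ with product of valuation $1$, forcing valuations $0$ and $1$, but the valuation-$0$ root would then satisfy $u^2\mp 3u+3=0\equiv u^2\pmod 3$, contradicting $u$ a unit). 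Everything else — admissibility, the Hodge–Tate type, the shape of the Frobenius matrix — is bookkeeping already set up in the excerpt.
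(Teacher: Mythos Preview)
Your proposal is correct and follows essentially the same route as the paper: both arguments reduce to the fact that $X^{2}+a_{3}X+3$ is irreducible over $\Q_{3}$, so the commutant of $\phi$ is the field $\Q_{3}[\phi]$, which acts transitively on lines in $D$. The only cosmetic difference is that the paper writes down the isomorphism explicitly (solving a $2\times 2$ linear system to produce $\lambda\Id+\mu\phi$ sending the given line to $\Q_{3}e_{2}$), whereas you invoke the transitivity abstractly via ``$D$ is a $1$-dimensional vector space over $\Q_{3}[\phi]$''; both are the same computation.
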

\begin{proof}
Let $D$ (resp. $D^{\prime}$) be the $\Q_3$-vector space associated to $\D$ (resp. $\Dc(1;a_3;0)$).
Let $B=(e_1,e_2)$ and $B^{\prime}=(e_1^{\prime},e_2^{\prime})$ be basis for $D$ and $D^{\prime}$ respectively such that
\[
M_B(\phi)=\begin{pmatrix} 0 & -3 \\ 1 & -a_3 \end{pmatrix}=M_{B^{\prime}}(\phi^{\prime}).
\]
Such a basis of $D$ always exists since $P_{\Char}(\phi)(X)=X^2+a_3X+3$ as $\D$ satisfies the condition $(1)$.
A $\Q_3$-isomorphism $\eta$ between $D$ and $D^{\prime}$ is $\phi$-equivariant if and only if
\[
M_{B,B^{\prime}}(\eta)\in C(M_B(\phi)).
\]
Where $C(M_B(\phi))$ denotes the centralizer of $M_B(\phi)$ in $\mathrm{GL}_2(\Q_3)$.
Notice that since
\[
C(M_B(\phi))=C(\Q_3[M_B(\phi)])
\]
and $P_{\Char}(\phi)(X)$ is irreducible, the Double Centralizer Theorem implies
\[
C(M_B(\phi))=\Q_3[M_B(\phi)]=\Q_3(M_B(\phi)).
\]
In particular, every non zero element of $\Q_3(M_B(\phi))$ is an isomorphism of $\phi$-modules between $(D,\phi)$
and $(D^{\prime},\phi^{\prime})$. Consider $\Fil^1 D=(\alpha e_1 + \beta e_2)\Q_3$, $(\alpha,\beta)\neq (0,0)$.
The matrix
\[
\begin{pmatrix} \alpha & -3\beta \\ \beta & \alpha-a_3\beta \end{pmatrix}
\]
is invertible because the homogenous polynomial $X^2-a_3XY+3Y^2$ only has trivial roots in ${(\Q_3)}^2$.
Let $(\lambda,\mu)\in\Q_3^2$ be the unique solution to the system of equations
\[
\begin{pmatrix} \alpha & -3\beta \\ \beta & \alpha-a_3\beta \end{pmatrix}\begin{pmatrix} x \\ y \end{pmatrix} = \begin{pmatrix}
0 \\ 1
\end{pmatrix}
\]
it follows that $(\lambda,\mu)\neq (0,0)$ and
\[
\begin{pmatrix} \lambda & -3\mu \\ \mu & \lambda-a_3\mu \end{pmatrix}\begin{pmatrix} \alpha \\ \beta \end{pmatrix} = \begin{pmatrix}
0 \\ 1
\end{pmatrix}.
\]
Therefore, the map $\lambda\Id+\mu M_B(\phi)\in \Q_3[M_B(\phi)]$ defines an isomorphism of filtered $\phi$-modules
between $\D$ and $\Dc(1;a_3;0)$. One checks the last assertion by a simple computation.
\end{proof}

\begin{remark}
There are $3$ isomorphism classes of filtered $\phi$-modules in the supersingular case, one for each value taken by $a$.
\end{remark}

\subsubsection{The ordinary case}
Let $a\in\{-2,-1,1,2\}$ and $\alpha\in\Proj^1(\Q_3)$. We denote by $\Dc(1;a;\alpha)$ the filtered $\phi$-module defined by:
\begin{itemize}
	\item $D=\Q_3 e_1\oplus\Q_3 e_2$
	\item $M_B(\phi)=\begin{pmatrix} u & 0 \\ 0 & u^{-1}3 \end{pmatrix}$, where $u\in\Z_3^{\times}$
    such that $u+u^{-1}3=-a$
	\item $\Fil^1D=(\alpha e_1 +e_2)\Q_3$.
\end{itemize}
For each $a\in\{-2,-1,1,2\}$ and each $\alpha\in\Proj^1(\Q_3)$, the filtered $\phi$-module $\Dc(1;a;\alpha)$ satisfies conditions $(1)-(4)$ and is admissible for $\alpha\neq\infty$.

\begin{proposition}
Let ${E}/{\Q_3}$ be an elliptic curve with good reduction such that $a_3=a_3(\Et)\in\{-2,-1,1,2\}$ and $\D=\Dcris[\Q_3](V_3(E))$.
There exists an isomorphism of filtered $\phi$-modules between $\D$ and $\Dc(1;a_3;\alpha)$ for some $\alpha\in\{0,1\}$.
Moreover, if $(\alpha,a),(\beta,b)\in\{ 0,1\}\times \{ -2,-1,1,2\}$ then $\Dc(1;a;\alpha)$ and $\Dc(1;b;\beta)$ are isomorphic
if and only if $(\alpha,a)=(\beta,b)$.
\end{proposition}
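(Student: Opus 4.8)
The plan is the same as in the supersingular proposition, adapted to a split Frobenius. Since $E$ has good reduction we are in the case $K=K_0=\Q_3$, so $\D=(D,\Fil)$ amounts to a $2$-dimensional $\Q_3$-vector space $D$, a bijective $\Q_3$-linear $\phi$ (the $\Gal(\Q_3/\Q_3)$-action and the semilinearity requirements being vacuous), and a line $\Fil^1D\subset D$ (Hodge--Tate type $(0,1)$, property $(4)$); moreover $\D$ is admissible, being $\Dcris[\Q_3]$ of the crystalline representation $V_3(E)$, and $P_{\Char}(\phi)(X)=X^2+a_3X+3$ by property $(1)$. First I would put $(D,\phi)$ in the standard shape used to define $\Dc(1;a_3;\alpha)$: since $a_3\in\{-2,-1,1,2\}$ is prime to $3$, the reduction $X(X+a_3)$ of $X^2+a_3X+3$ modulo $3$ has simple roots, so Hensel's lemma gives two roots in $\Z_3$, a unit $u$ and $3u^{-1}$, distinct because they have different valuations, with $u+3u^{-1}=-a_3$. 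Hence $\phi$ is diagonalisable, and an eigenbasis $B=(e_1,e_2)$ with $\phi e_1=ue_1$, $\phi e_2=3u^{-1}e_2$ makes $M_B(\phi)$ exactly the matrix in the definition of $\Dc(1;a_3;\alpha)$.

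\textbf{The filtration and normalisation of $\alpha$.} Writing $\Fil^1D=(\alpha e_1+\beta e_2)\Q_3$ with $(\alpha,\beta)\neq(0,0)$, I would first check that $\beta\neq 0$: the line $\Q_3e_1$ is $\phi$-stable with $t_N(\Q_3e_1)=v_3(u)=0$, so if $\Fil^1D=\Q_3e_1$ then $t_H(\Q_3e_1)=1>0=t_N(\Q_3e_1)$, contradicting admissibility of $\D$. Rescaling then gives $\D\simeq\Dc(1;a_3;\alpha)$ for some $\alpha\in\Q_3$. To cut $\alpha$ down to $\{0,1\}$ I would describe the $\phi$-equivariant $\Q_3$-linear automorphisms of $(D,\phi)$: they form the centralizer of $M_B(\phi)$ in $\mathrm{GL}_2(\Q_3)$, which equals $\Q_3[M_B(\phi)]=\{\operatorname{diag}(c,d):c,d\in\Q_3^\times\}$ since $M_B(\phi)$ has distinct eigenvalues (the split analogue of the Double Centralizer step used in the supersingular case, where instead the characteristic polynomial was irreducible). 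Such a map sends $(\alpha e_1+e_2)\Q_3$ to $\bigl((c/d)\alpha\,e_1+e_2\bigr)\Q_3$, hence replaces $\alpha$ by $t\alpha$ with $t\in\Q_3^\times$ arbitrary; as the $\Q_3^\times$-orbits on $\Q_3$ are $\{0\}$ and $\Q_3^\times$, with representatives $0$ and $1$, we obtain $\D\simeq\Dc(1;a_3;\alpha)$ for a unique $\alpha\in\{0,1\}$.

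\textbf{Distinctness.} For the last assertion one direction is trivial. If $a\neq b$, the underlying $\phi$-modules of $\Dc(1;a;\alpha)$ and $\Dc(1;b;\beta)$ have distinct characteristic polynomials $X^2+aX+3$ and $X^2+bX+3$, so they are not isomorphic even as $\phi$-modules; and if $a=b$, the computation above shows $0$ and $1$ lie in distinct $\Q_3^\times$-orbits, so $\Dc(1;a;0)\not\simeq\Dc(1;a;1)$. Hence $(\alpha,a)\mapsto[\Dc(1;a;\alpha)]$ is injective on $\{0,1\}\times\{-2,-1,1,2\}$, which is the claim.

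I expect nothing here to be a serious obstacle; the only points with real content are the short admissibility computation that forces $\beta\neq0$ (equivalently $\alpha\neq\infty$) and the orbit count above — and these are precisely what makes the ordinary case at $p=3$ behave exactly like the ordinary case for general $p$, yielding two isomorphism classes per trace rather than one or a continuum.
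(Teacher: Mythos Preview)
Your proof is correct and follows essentially the same approach as the paper: diagonalise $\phi$ via the factorisation $X^2+a_3X+3=(X-u)(X-3u^{-1})$ with $u\in\Z_3^\times$, use admissibility to rule out $\Fil^1D=\Q_3e_1$, and then use that the centraliser of the diagonal Frobenius consists of diagonal matrices to reduce $\alpha$ to $\{0,1\}$. You are in fact slightly more explicit than the paper on two points---the Hensel argument for the splitting and the admissibility computation excluding $\alpha=\infty$---but the method is the same.
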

\begin{proof}
Since $\D$ is admissible, the only possible filtrations are defined by a $\Q_3$-line of the
form $\Fil^1 D=(\beta e_1+e_2)\Q_3$ for some $\beta\in\Q_3$.
Let $\alpha\in\{ 0,1\}$ and $D^{\prime}$ be the $\Q_3$-vector space associated to $\Dc(1;a_3;\alpha)$.
Let $B=(e_1,e_2)$, $B^{\prime}=(e_1^{\prime},e_2^{\prime})$ be basis of $D$ and $D^{\prime}$ respectively, such that
\[
M_B(\phi)=\begin{pmatrix} u & 0 \\ 0 & u^{-1}3 \end{pmatrix}=M_{B^{\prime}}(\phi^{\prime}),\ u\in\Z_3^{\times},\ u+u^{-1}3=-a_3.
\]
Such a basis exists because $\D$ satisfies $(1)$ and $(a_3,3)=1$ and thus we have
\[
P_{\Char}(\phi)(X)=X^2+a_3X+3=(X-u)(X-u^{-1}3)\text{ for some }u\in\Z_3^{\times}.
\]
A $\Q_3$-isomorphism $\eta$ between $D$ and $D^{\prime}$ is $\phi$-equivariant if and only if
\[
M_{B,B^{\prime}}(\eta)\in C(M_B(\phi)).
\]
This time, since $P_{\Char}(\phi)(X)$ is a product of distinct linear factors
\[
C(\Q_3[M_B(\phi)])=\Q_3[M_B(\phi)]=\left\lbrace \begin{pmatrix}\lambda & 0 \\ 0 & \mu\end{pmatrix}:\ \lambda,\mu\in\Q_3\right\rbrace\simeq\Q_3[X]/(X-u)\times\Q_3[X]/(X-u^{-1}3).
\]
If $\beta=0$, then every invertible element of $C(M_B(\phi))$ defines an isomorphism of filtered $\phi$-modules
between $\D$ and $\Dc(1;a_3;0)$. If $\beta\neq 0$, then taking $\lambda=\beta^{-1}$ and $\mu=1$ gives an isomorphism
of filtered  $\phi$-modules between $\D$ and $\Dc(1;a_3;1)$.
\end{proof}

\begin{remark}
There are $8$ isomorphism classes of filtered $\phi$-modules in the ordinary case, two for each possible value taken by $a$.
\end{remark}

\begin{remark}
The elliptic curves $E/\Q_3$ with ordinary good reduction and $\alpha=0$ are canonical lifts of their corresponding reduced curve $\Et/\Fd_3$.
\end{remark}

\subsection{\for{toc}{The quadratic case}\except{toc}{The quadratic case ($e=2$)}}

Let ${E}/{\Q_3}$ with semi-stability defect $\dst(E)=2$. Since $2$ and $3$ are coprime, the only quadratic extension
of $\Q_3^{\mathrm{un}}$ is $\Q_3^{\mathrm{un}}(\sqrt{3})$. Let $K=\Q_3(\sqrt{3})$, it is a Galois extension of
degree $2$ with Galois group $\langle \tau_2\rangle$ over which $E$ acquire good reduction. As usual we denote
by $a_3=a_3(\Et)$ the trace of the Frobenius of ${\Et}/{\Fd_3}$.

\subsubsection{The supersingular case}

Let $a\in\{ -3,0,3\}$ and $\alpha\in\Proj^1(K)$. We denote by $\Dc(2;a;\alpha)$ the filtered $(\phi,\Gal({K}/{\Q_3}))$-module
defined by:
\begin{itemize}
	\item $D=\Q_3 e_1\oplus\Q_3 e_2$
	\item $M_B(\phi)=\begin{pmatrix} 0 & -3 \\ 1 & -a\end{pmatrix}$
	\item $M_B(\tau_2)=\begin{pmatrix} -1 & 0 \\ 0 & -1\end{pmatrix}$
	\item $\Fil^1D_K=(\alpha\otimes e_1 + 1\otimes e_2)K$, where $D_K=K\otimes_{K_0}D$.
\end{itemize}
For each $a\in\{-3,0,3\}$ and each $\alpha\in\Proj^1(K)$, the filtered $(\phi,\Gal({K}/{\Q_3}))$-module $\Dc(2;a;\alpha)$ satisfies conditions $(1)-(4)$ and is admissible.

\begin{proposition}
Let ${E}/{\Q_3}$ be an elliptic curve with $\dst(E)=2$ such that $a_3=a_3(\Et)\in\{-3,0,3\}$ and $\D=\Dcris[K](V_3(E))$.
There exists an isomorphism of filtered $(\phi,\Gal({K}/{\Q_3}))$-modules between $\D$ and $\Dc(2;a_3;0)$. Moreover,
if $a,b\in\{ -3,0,3\}$ then $\Dc(2;a;0)$ and $\Dc(2;b;0)$ are isomorphic if and only if $a=b$.
\end{proposition}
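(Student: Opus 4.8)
The plan is to mimic the argument of the supersingular $e=1$ proposition, carrying the extra datum of the $\Gal(K/\Q_3)$-action through the computation. First I would invoke condition $(1)$ to find a basis $B=(e_1,e_2)$ of the underlying $\Q_3$-vector space $D$ in which $M_B(\phi)=\left(\begin{smallmatrix} 0 & -3 \\ 1 & -a_3\end{smallmatrix}\right)$; this is possible because $P_{\Char}(\phi_0)(X)=X^2+a_3X+3$ is irreducible over $\Q_3$ when $a_3\in\{-3,0,3\}$ (here $D=D_0$ since $K/\Q_3$ is totally ramified, so $K_0=\Q_3$ and $\phi=\phi_0$). The next step is to pin down the action of $\tau_2$: since $\phi$ is $\Gal(K/\Q_3)$-equivariant, the matrix $M_B(\tau_2)$ must commute with $M_B(\phi)$, hence lies in $\Q_3(M_B(\phi))$ by the Double Centralizer Theorem; moreover $\tau_2^2=\Id$ forces $M_B(\tau_2)^2=I$, and in the field $\Q_3(M_B(\phi))$ the only square roots of $1$ are $\pm I$. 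The value $-I$ (rather than $+I$) is forced because otherwise the inertia action on $V_3(E)$ would be trivial, contradicting $\dst(E)=2$; alternatively one reads it off from the fact that the quadratic character cutting out $K$ is ramified. So $M_B(\tau_2)=-I=M_{B'}(\tau_2')$, matching $\Dc(2;a_3;\alpha)$.

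With $\phi$ and $\tau_2$ matched, any $\phi$-equivariant $\Q_3$-isomorphism $\eta:D\to D'$ has matrix in $C(M_B(\phi))=\Q_3(M_B(\phi))$, and such an $\eta$ is automatically $\tau_2$-equivariant since $\tau_2$ acts as $-I$ on both sides. It remains to choose the scalar so that the filtration is respected. Here the new point compared with $e=1$ is that $\Fil^1D_K$ is a $\Gal(K/\Q_3)$-stable line in $D_K=K\otimes_{\Q_3}D$; by the Galois-descent remark in Section~3, since $\tau_2$ acts on $D_K$ by $\tau_2$ on $K$ tensored with $-I$ on $D$, the fixed space $D_K^{\Gal(K/\Q_3)}$ is spanned over $\Q_3$ by $\sqrt{3}\otimes e_1$ and $\sqrt{3}\otimes e_2$, so every stable line has the form $(\alpha\otimes e_1+\beta\otimes e_2)K$ with $(\alpha,\beta)\in\Q_3^2\setminus\{0\}$ — exactly the shape appearing in the definition of $\Dc(2;a;\alpha)$. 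Then the same linear-algebra trick as in the $e=1$ case applies verbatim: the matrix $\left(\begin{smallmatrix}\alpha & -3\beta \\ \beta & \alpha-a_3\beta\end{smallmatrix}\right)$ is invertible because $X^2-a_3XY+3Y^2$ has no nontrivial $\Q_3$-zero (as $\disc = a_3^2-12<0$ is not a square, or directly by valuation), so there is a unique $(\lambda,\mu)\neq(0,0)$ with $\lambda\Id+\mu M_B(\phi)$ sending $\alpha\otimes e_1+\beta\otimes e_2$ to $1\otimes e_2$, up to a $K$-scalar; this element of $\Q_3(M_B(\phi))$ is the desired isomorphism of filtered $(\phi,\Gal(K/\Q_3))$-modules $\D\xrightarrow{\sim}\Dc(2;a_3;0)$.

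For the last assertion I would note that an isomorphism $\Dc(2;a;0)\simeq\Dc(2;b;0)$ in particular gives a $\Q_3$-linear $\phi$-equivariant isomorphism of the underlying $\phi$-modules, hence $\phi$ on the two sides has the same characteristic polynomial, giving $X^2+aX+3=X^2+bX+3$ and $a=b$; conversely if $a=b$ the identity works. I expect the only genuinely delicate point to be the justification that $M_B(\tau_2)=-I$ and not $+I$ — i.e.\ ruling out the unramified (trivial-descent) possibility — which needs either the minimality of $K$ as a good-reduction field, or an explicit appeal to the ramified quadratic character; everything else is a routine transcription of the $e=1$ supersingular argument together with the Galois-descent description of the admissible filtrations recalled in Section~3.
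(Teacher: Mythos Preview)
Your proposal is correct and follows essentially the same route as the paper: choose a companion-matrix basis for $\phi$, show $\tau_2=-\Id$, identify the $\Gal(K/\Q_3)$-stable lines as those with $\Q_3$-rational coefficients, and then transport the filtration to $\alpha=0$ using an element of $\Q_3(M_B(\phi))$. The only genuine difference is in the justification of $\tau_2=-\Id$: the paper invokes conditions~(2)--(3) directly (trace rational and determinant $1$, together with order $2$ coming from minimality of $e$, force $P_{\Char}(\tau_2)=(X+1)^2$), whereas you argue via $M_B(\tau_2)\in C(M_B(\phi))=\Q_3(M_B(\phi))$ and the field structure; both arguments ultimately rely on minimality to exclude $+\Id$, as you correctly flag. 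The explicit isomorphisms the paper writes down for $\alpha=\infty$ and $\alpha\neq 0,\infty$ are precisely $M_B(\phi)$ and $(3/\alpha)\Id+M_B(\phi)$, i.e.\ special cases of your $\lambda\Id+\mu M_B(\phi)$. One small quibble: your remark that $\mathrm{disc}=a_3^2-12<0$ is an $\mathbb{R}$-style statement and doesn't by itself show non-squareness in $\Q_3$; the valuation argument you add (``or directly by valuation'') is the right one.
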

\begin{proof}
Let $D$ be the underlying $\Q_3$-vector space associated to $\D$ and $B=(e_1,e_2)$ a basis of $D$ such that
\[
M_B(\phi)=\begin{pmatrix} 0 & -3 \\ 1 & -a_3\end{pmatrix}.
\]
The element $\tau_2$ is seen as a $\Q_3$-automorphism of $D$ and is of order $2$. Since $\D$ satisfies conditions $(2)-(3)$,
we have $P_{\Char}(\tau_2)(X)\in\Q[X]$ and $\detm(\tau_2)=1$, so that
\[
P_{\Char}(\tau_2)(X)={(X+1)}^2
\]
thus $\tau_2=-\Id$. The $K$-line $(1\otimes e_1)K$ is stable by $\tau_2$ and if $\alpha\in K$, the $K$-line
$(\alpha\otimes e_1 +1\otimes e_2)K$ is stable by $\tau_2$ if and only if $\alpha\in\Q_3$. Let $\alpha\in\Proj^1(K)$ such
that $\Fil^1 D_K=(\alpha\otimes e_1 +1\otimes e_2)K$ is the $K$-line defining the filtration of $\D$, we will
show that $\D\simeq \Dc(2;a_3;0)$. If $\alpha=0$ it is obvious. If $\alpha=\infty$, the isomorphism is given
by $e_1\mapsto e_2$ and $e_2\mapsto -3e_1$. Finally, if $\alpha\neq 0,\infty$ it is given by $e_1\mapsto (3/\alpha)e_1 + e_2$
and $e_2\mapsto -3e_1 + (3/\alpha-a_3)e_2$.
\end{proof}

\subsubsection{The ordinary case}

Let $a\in\{-2,-1,1,2\}$ and $\alpha\in\Proj^1(\Q_3)$. We denote by $\Dc(2;a;\alpha)$ the filtered $(\phi,\Gal({K}/{\Q_3}))$-module
defined by:
\begin{itemize}
	\item $D=\Q_3 e_1\oplus \Q_3 e_2$,
	\item $M_B(\phi)=\begin{pmatrix} u & 0\\ 0 & u^{-1}3\end{pmatrix}$ where $u\in\Z_3^{\times}$ such that $u+u^{-1}3=-a$
	\item $M_B(\tau_2)=\begin{pmatrix} -1 & 0 \\ 0 & -1\end{pmatrix}$
	\item $\Fil^1D_K=(\alpha\otimes e_1+1\otimes e_2)K$.
\end{itemize}
For each $a\in\{-2,-1,1,2\}$ and each $\alpha\in\Proj^1(\Q_3)$, the filtered $(\phi,\Gal({K}/{\Q_3}))$-module $\Dc(2;a;\alpha)$ satisfies conditions $(1)-(4)$ and is admissible for $\alpha\neq\infty$.

\begin{proposition}
Let ${E}/{\Q_3}$ be an elliptic curve with $\dst(E)=2$ such that $a_3=a_3(\Et)\in\{-2,-1,1,2\}$ and $\D=\Dcris[K](V_3(E))$.
There exists an isomorphism of filtered $(\phi,\Gal({K}/{\Q_3}))$-modules between $\D$ and $\Dc(2;a_3;\alpha)$ for
some $\alpha\in\{ 0,1\}$. Moreover, if $(\alpha,a),(\beta,b)\in\{0,1\}\times\{-2,-1,1,2\}$ then $\Dc(2;a;\alpha)$
and $\Dc(2;b;\beta)$ are isomorphic if and only if $(\alpha,a)=(\beta,b)$.
\end{proposition}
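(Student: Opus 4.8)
The plan is to mirror the proof of the ordinary case for $e=1$, exploiting the fact that the new ingredient $\tau_2$ is forced to be $-\Id$ exactly as in the supersingular $e=2$ proposition above. First I would let $D$ be the underlying $\Q_3$-vector space of $\D$ and recall from condition $(1)$ together with $(a_3,3)=1$ that the characteristic polynomial $P_{\Char}(\phi)(X)=X^2+a_3X+3$ splits as $(X-u)(X-u^{-1}3)$ with $u\in\Z_3^{\times}$ and $u+u^{-1}3=-a_3$, so there is a basis $B=(e_1,e_2)$ in which $M_B(\phi)=\operatorname{diag}(u,u^{-1}3)$. Next, since $\D$ satisfies conditions $(2)$ and $(3)$, the $\Q_3$-automorphism $\tau_2$ of $D$ has rational characteristic polynomial and determinant $1$, and being of order $2$ this forces $P_{\Char}(\tau_2)(X)=(X+1)^2$, hence $\tau_2=-\Id$; in particular $M_B(\tau_2)=\operatorname{diag}(-1,-1)$, matching the definition of $\Dc(2;a_3;\alpha)$.

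I would then analyze the filtration. Because $\tau_2=-\Id$ acts as a scalar, every $K$-line of $D_K$ is $\Gal(K/\Q_3)$-stable, so by the discussion in Section~3 the Hodge--Tate $(0,1)$ filtrations on $\D$ correspond to lines in $D_K^{\Gal(K/\Q_3)}=\Q_3 e_1\oplus\Q_3 e_2$, i.e. to a line $(\beta e_1+e_2)\Q_3$ with $\beta\in\Q_3$ or the exceptional line $e_1\Q_3$. Admissibility rules out the line $e_1\Q_3$ (which would make the $\phi$-stable line $\Q_3 e_1$ into a subobject with $t_H=1>0=t_N$, since $v_3(u)=0$), so $\Fil^1D_K=(\beta\otimes e_1+1\otimes e_2)K$ for some $\beta\in\Q_3$. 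As in the ordinary $e=1$ proof, the $\phi$-equivariant isomorphisms $D\to D^{\prime}$ are exactly the invertible elements of $C(M_B(\phi))=\Q_3[M_B(\phi)]=\{\operatorname{diag}(\lambda,\mu)\}$, and such a diagonal matrix automatically commutes with $\tau_2=-\Id$, hence is automatically $\Gal(K/\Q_3)$-equivariant; taking $(\lambda,\mu)=(1,1)$ if $\beta=0$ and $(\lambda,\mu)=(\beta^{-1},1)$ if $\beta\neq 0$ produces an isomorphism of filtered $(\phi,\Gal(K/\Q_3))$-modules between $\D$ and $\Dc(2;a_3;0)$ or $\Dc(2;a_3;1)$ respectively.

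For the uniqueness statement, I would observe that an isomorphism $\Dc(2;a;\alpha)\to\Dc(2;b;\beta)$ restricts to an isomorphism of the underlying $\phi$-modules, which forces the characteristic polynomials $X^2+aX+3$ and $X^2+bX+3$ to agree, hence $a=b$; then the isomorphism is a nonzero element of $\Q_3[M_B(\phi)]$, and tracking where it sends the filtration line $(\alpha e_1+e_2)\Q_3$ shows it lands on $(\beta e_1+e_2)\Q_3$ only when, up to the $\Q_3^{\times}$-scaling built into the $\operatorname{diag}(\lambda,\mu)$ action, $\alpha$ and $\beta$ lie in the same orbit; since $0$ and $1$ are in distinct orbits under this action (the orbit of $0$ is $\{0\}$ and the orbit of any $\beta\neq0$ is $\Q_3^{\times}$), we conclude $\alpha=\beta$. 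The only mild obstacle is bookkeeping the compatibility of the chosen isomorphisms with $\tau_2$, but since $\tau_2$ is scalar this is immediate, so the proof is essentially identical to the ordinary $e=1$ case with the extra harmless datum $M_B(\tau_2)=-\Id$.
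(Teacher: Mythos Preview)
Your approach is essentially the paper's own: it simply refers back to the ordinary crystalline case for the description of $\phi$ and to the supersingular quadratic case for $\tau_2$ and the filtration, and you have spelled out exactly that combination.

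One inaccuracy is worth flagging. It is \emph{not} true that ``every $K$-line of $D_K$ is $\Gal(K/\Q_3)$-stable''. The action of $\tau_2$ on $D_K=K\otimes_{\Q_3}D$ is the diagonal one, $k\otimes d\mapsto \tau_2(k)\otimes(-d)$, which is $K$-semilinear, not scalar; so $(\alpha\otimes e_1+1\otimes e_2)K$ is $\tau_2$-stable iff $\tau_2(\alpha)=\alpha$, i.e.\ $\alpha\in\Q_3$, exactly as computed in the supersingular $e=2$ proof. For the same reason $D_K^{\Gal(K/\Q_3)}=\Q_3(\sqrt{3}\otimes e_1)\oplus\Q_3(\sqrt{3}\otimes e_2)$ rather than $\Q_3 e_1\oplus\Q_3 e_2$. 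Fortunately your conclusion that the filtration parameter $\beta$ lies in $\Q_3$ is unaffected, and from that point on the argument (diagonal centralizer, orbits $\{0\}$ and $\Q_3^{\times}$ under $\operatorname{diag}(\lambda,\mu)$, trivial compatibility with $\tau_2=-\Id$) is correct.
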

\begin{proof}
See the ordinary crystalline case for the description of $\phi$ and the supersingular quadratic case for the description
of $\tau_2$ and the filtration.
\end{proof}

\begin{remark}
These are exactly the twists by the ramified quadratic character associated to ${\Q_3(\sqrt{3})}/{\Q_3}$ of the corresponding crystalline cases.
\end{remark}

\begin{remark}
    As in the crystalline case, the elliptic curves $E/\Q_3$ with ordinary potential good reduction and $\alpha=0$ are canonical lifts of their corresponding reduced curve $\Et/\Fd_3$.
\end{remark}

\subsection{\for{toc}{The quartic case}\except{toc}{The quartic case ($e=4$)}}

Let ${E}/{\Q_3}$ with semi-stability defect $\dst(E)=4$. Again, since $4$ and $3$ are coprime, the only quartic extension
of $\Q_3^{\mathrm{un}}$ is $\Q_3^{\mathrm{un}}(\sqrt[4]{3})$. Let us fix $\zeta_4$ a primitive fourth root of unity and $\pi_4$ a root of $X^4-3$ in $\Qal_3$.
Consider $L=\Q_3(\pi_4)$, $K=L(\zeta_4)$ its algebraic closure and $K_0=\Q_3(\zeta_4)$ the maximal unramified
extension of ${K}/{\Q_3}$. Our curve necessarily acquires good reduction over $L$. Let $\sigma\in G({K_0}/{\Q_3})$ be the absolute
Frobenius on $K_0$, $\omega\in G({K}/{\Q_3})$ a lifting of $\sigma$ fixing $L$ and $\tau_4$ a generator of $G(K/K_0)=I({K}/{\Q_3})$.
Then $G({K}/{\Q_3})=\langle \tau_4\rangle\rtimes \langle \omega \rangle$ with $\tau_4\omega =\omega\tau_4^{-1}$.

\vspace{\baselineskip}

Let $\alpha\in\Proj^1(\Q_3)$. We denote by $\Dpc(4;0;\alpha)$ the filtered $(\phi,\Gal({K}/{\Q_3}))$-module defined by:
\begin{itemize}
	\item $D=K_0e_1\oplus K_0e_2$
	\item $\phi (e_1)=e_2,\ \phi (e_2)=-3e_1$
	\item $M_B(\tau_4)=\begin{pmatrix} \zeta_4 & 0 \\ 0 & \zeta_4^{-1}\end{pmatrix}$
	\item $\omega (e_1)=e_1,\ \omega (e_2)=e_2$
	\item $\Fil^1D_K=(\alpha\pi_4^{-1}\otimes e_1 + \pi_4\otimes e_2)K$.
\end{itemize}
For each $\alpha\in\Proj^1(\Q_3)$, the filtered $(\phi,\Gal({K}/{\Q_3}))$-module $\Dpc(4;0;\alpha)$ satisfies conditions $(1)-(4)$ and is admissible.

\begin{proposition}
Let ${E}/{\Q_3}$ be an elliptic curve with $\dst(E)=4$ and $\D=\Dcris[K](V_3(E))$. There exists an isomorphism of
filtered $(\phi,\Gal({K}/{\Q_3}))$-modules between $\D$ and $\Dpc(4;0;\alpha)$. Moreover if $\alpha,\beta\in\Proj^1(\Q_3)$,
then $\Dpc(4;0;\alpha)\simeq \Dpc(4;0;\beta)$ if and only if $\alpha =\beta$.
\end{proposition}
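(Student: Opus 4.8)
The plan is to mimic the structure of the earlier supersingular cases ($e=1,2$), but now over the two-dimensional $K_0 = \Q_3(\zeta_4)$-vector space $D$, keeping careful track of the semilinear actions of $\phi$, $\tau_4$ and $\omega$. First I would extract the constraints that conditions $(1)$--$(4)$ impose on $\D$. Since $E$ has $\dst(E)=4$ and $4$ is prime to $3$, the inertia acts through the tame quotient, so $\tau_4$ has order $4$ and $\W(\D)$ restricted to inertia is the direct sum of a faithful character of order $4$ and its inverse (these are the only $\Q$-rational possibilities compatible with $\det = 1$ from condition $(3)$ and $a_3 = 0$ from the supersingular condition forcing the trace on inertia to vanish). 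Hence there is a $K_0$-basis $B = (e_1,e_2)$ in which $M_B(\tau_4) = \mathrm{diag}(\zeta_4,\zeta_4^{-1})$, and by condition $(1)$ we may further arrange $P_{\Char}(\phi_0)(X) = X^2 + 3$, i.e. $\phi_0^2 = -3$ on $D_0$. The $\Gal(K/\Q_3)$-equivariance relation $\phi\tau_4 = \tau_4\phi$ together with $\phi$ being $\sigma$-semilinear over $K_0 = \Q_3(\zeta_4)$ (where $\sigma$ inverts $\zeta_4$) forces $\phi$ to swap the two $\tau_4$-eigenlines; rescaling $e_2$ one gets $\phi(e_1) = e_2$, $\phi(e_2) = -3e_1$. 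Finally $\omega$ commutes with $\tau_4^{-1}$-conjugation and fixes $L$, and a short computation (using that $\omega$ fixes $\zeta_4$-eigenvectors up to the relations) pins down $\omega(e_1) = e_1$, $\omega(e_2) = e_2$ after a last normalization. This shows the unfiltered $(\phi,\Gal(K/\Q_3))$-module of $\D$ is the one underlying $\Dpc(4;0;\alpha)$.

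Next I would handle the filtration. By the discussion in Section~3, the Hodge--Tate type $(0,1)$ filtrations on this unfiltered module are parametrized by the $\Gal(K/\Q_3)$-stable lines in $D_K$, equivalently by lines in the $2$-dimensional $\Q_3$-space $D_K^{\Gal(K/\Q_3)}$. I would compute this invariant subspace explicitly: since $\tau_4$ acts on $e_1$ with eigenvalue $\zeta_4$ and $\omega$ fixes $e_1$, the vector $\pi_4^{-1}\otimes e_1$ is $\Gal(K/\Q_3)$-invariant (the tame character cancels against the $\tau_4^{-1}$ action coming from $\tau_4\omega = \omega\tau_4^{-1}$ on $\pi_4$), and likewise $\pi_4\otimes e_2$ is invariant; these two span $D_K^{\Gal(K/\Q_3)}$. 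Therefore every admissible filtration is of the form $\Fil^1 D_K = (\alpha\,\pi_4^{-1}\otimes e_1 + \pi_4\otimes e_2)K$ for a unique $\alpha\in\Proj^1(\Q_3)$, which is exactly $\Dpc(4;0;\alpha)$. Admissibility (which we are told holds for every $\alpha$) guarantees no line is excluded, so $\D \simeq \Dpc(4;0;\alpha)$ for some $\alpha$.

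For the second assertion — that $\Dpc(4;0;\alpha)\simeq\Dpc(4;0;\beta)$ forces $\alpha=\beta$ — I would argue as in the $e=1$ supersingular proposition. Any isomorphism of $(\phi,\Gal(K/\Q_3))$-modules is given by a matrix $M$ in the centralizer of both $M_B(\phi)$ and $M_B(\tau_4)$; commuting with $\mathrm{diag}(\zeta_4,\zeta_4^{-1})$ forces $M$ diagonal, say $M = \mathrm{diag}(\lambda,\mu)$, and then $\sigma$-semilinear commutation with $\phi(e_1)=e_2$, $\phi(e_2)=-3e_1$ forces $\mu = \sigma(\lambda) = \lambda$ (as $\lambda$ must actually lie in $\Q_3$ for the map to descend — commuting with $\omega$), so $M$ is scalar. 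A scalar sends $(\alpha\pi_4^{-1}\otimes e_1 + \pi_4\otimes e_2)K$ to itself, hence it preserves the filtration only when $\alpha = \beta$, and conversely a scalar always works when $\alpha = \beta$. This completes the proof.

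I expect the main obstacle to be the normalization of $\omega$: unlike $\tau_4$ (diagonalizable with prescribed eigenvalues) and $\phi$ (pinned down by the characteristic polynomial plus equivariance), the lift $\omega$ of Frobenius fixing $L$ has a priori more freedom, and one must use the precise relations in $\Gal(K/\Q_3) = \langle\tau_4\rangle\rtimes\langle\omega\rangle$ together with the requirement that $\omega$ be $\sigma$-semilinear and commute appropriately with $\phi$ to force $\omega = \Id$ on $D$ after rescaling — being careful that the rescaling used to fix $\omega$ does not disturb the already-chosen forms of $\phi$ and $\tau_4$. A secondary subtlety is checking that $\pi_4^{\pm 1}\otimes e_i$ really are $\Gal(K/\Q_3)$-fixed, which requires knowing exactly how $\omega$ and $\tau_4$ act on $\pi_4$; this is a direct but slightly fiddly computation with the chosen uniformizer.
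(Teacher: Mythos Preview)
Your proposal is essentially correct and follows the same strategy as the paper, but two points deserve comment. First, you state $a_3=0$ as if it were an input (invoking ``the supersingular condition'' and condition~(1)), whereas in fact it is a \emph{consequence}: once $\phi$ is shown to swap the $\tau_4$-eigenlines, its matrix in the eigenbasis is off-diagonal and hence $\Tr(\phi_0)=0$. The paper makes this explicit (``we see that $a_3=0$''), and you should too --- condition~(1) only gives $|a_3|_\infty\le 2\sqrt{3}$. Second, the paper normalizes in the order $\tau_4\to\omega\to\phi$ rather than your $\tau_4\to\phi\to\omega$: after diagonalizing $\tau_4$, the relation $\tau_4\omega=\omega\tau_4^{-1}$ shows $\omega$ preserves each eigenline, and Galois descent on each line produces $\omega$-fixed basis vectors; \emph{then} commutation of $\phi$ with $\omega$ forces the off-diagonal entries of $\phi$ to lie in $\Q_3$, so the final rescaling is by a rational scalar and disturbs nothing. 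This order neatly sidesteps exactly the obstacle you anticipated in your last paragraph. The filtration and uniqueness arguments are the same as yours.
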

\begin{proof}
Let $D$ be the underlying $K_0$-vector space associated to $\D$, the element $\tau_4$ acts $K_0$-linearly over $D$ and
the morphism
\[
I({K}/{\Q_3})\longrightarrow\Aut_{K_0}(D)
\]
is injective by minimality of $e({K}/{\Q_3})$. We identify $\tau_4$ to its image in $\Aut_{K_0}(D)$, it is an element of order $4$.
Again, because $\D$ satisfies $(2)-(3)$ we have $\detm(\tau_4)=1$ and $P_{\Char}(\tau_4)(X)\in\Q[X]$ so that
\[
P_{\Char}(\tau_4)(X)=P_{\min}(\tau_4)(X)=X^2+1=(X-\zeta_4)(X-\zeta_4^{-1})
\]
in particular $\tau_4$ is diagonalizable in $K_0$ and has distinct eigenvalues. Let $(e_1,e_2)$ be a diagonalization basis
of $\tau_4$ over $K_0$. The relation $\tau_4\omega =\omega\tau_4^{-1}$ implies that $\omega(e_i)\in K_0e_i$, $i=1,2$.
Denote by $\omega_i=\omega\vert_{K_0e_i}$, the group $\langle\omega_i\rangle$ acts semi-linearly over $K_0e_i$.
Descent theory tells us that ${(K_0e_i)}^{\langle\omega_i\rangle}\neq\{0\}$. We can then find a basis $(e_1,e_2)$ of $D$
over $K_0$ which is fixed by $\omega$ and such that $\tau_4 (e_1)=\zeta_4e_1,\ \tau_4 (e_2)=\zeta_4^{-1}e_2$. Since $\phi$
is $\Gal({K}/{\Q_3})$-equivariant, it commutes to $\tau_4$ and $\omega$, a simple calculation shows that $\phi (e_1)\in\Q_3 e_2$
and $\phi (e_2)\in\Q_3 e_1$. Since $\detm(\phi)=3$, $\phi (e_1)=ae_2$ and $\phi (e_2)=-3a^{-1}e_1$,
necessarily $a\in\Q_3^{\times}$. That way we show that there exists a $K_0$-basis of $D$ such that
\[
\phi (e_1)=e_2,\ \phi (e_2)=-3e_1,\ \tau_4 (e_1)=\zeta_4e_1,\ \tau_4 (e_2)=\zeta_4^{-1}e_2,\ \omega (e_1)=e_1,\ \omega (e_2)=e_2
\]
We have now described the $(\phi,\Gal({K}/{\Q_3}))$-module structure on $D$. In particular, we see that $a_3=0$, i.e. $\Et_L$ is
supersingular, but the two other supersingular values $3$ and $-3$ cannot appear.

What is left is to determine the $K$-line $\Fil^1D_K$ which defines the filtration, it needs to satisfy the weak admissibility
condition and be $\Gal({K}/{\Q_3})$-stable. Since $\phi$ does not have any stable subspaces, it is immediate.
The $K$-line $(1\otimes e_1)K$ is stable by action of $\Gal({K}/{\Q_3})$. Let $\beta\in\Q_3$
and $\Fil^1D_K=(\beta\otimes e_1+1\otimes e_2)K$. One easily shows that $\Fil^1 D_K$ is stable by $\omega$ if and
only if $\beta\in L$ and by $\tau_4$ if and only if $\pi_4^2\beta\in K_0$. Then $\Fil^1 D_K$ is stable by
action of $\Gal({K}/{\Q_3})$ if and only if $\pi_4^2\beta\in L\cap K_0=\Q_3$. Let $\alpha=\pi_4^2\beta\in\Q_3$,
we can rewrite our $K$-line defining the filtration as
\[
\Fil^1 D_K=(\alpha \pi_4^{-1}\otimes e_1 + \pi_4\otimes e_2)K
\]
it is then clear that $\D\simeq\Dpc(4;0;\alpha)$.

Let $\alpha,\beta\in\Proj^1(\Q_3)$, consider the following filtered $(\phi,\Gal({K}/{\Q_3}))$-modules:
$\D=\Dpc(4;0;\alpha)$, $\D^{\prime}=\Dpc(4;0;\beta)$ and let $B=(e_1,e_2)$, $B^{\prime}=(e_1^{\prime},e_2^{\prime})$
be $K_0$-basis of $D$ and $D^{\prime}$ their respective underlying $K_0$-vector spaces. Let $\psi:\D\longrightarrow \D^{\prime}$
be a non zero morphism of filtered $(\phi,\Gal({K}/{\Q_3}))$-modules. Let $D_0=D^{\langle\omega\rangle}$
and $D_0^{\prime}={(D^{\prime})}^{\langle\omega^{\prime}\rangle}$. The relation $\psi\circ\omega = \omega^{\prime}\circ\psi$
implies $\psi(D_0)\subseteq D_0^{\prime}=\Q_3 e_1^{\prime}\oplus\Q_3 e_2^{\prime}$.
Moreover, $\psi\circ\tau_4 =\tau_4^{\prime}\circ\psi$ implies $\psi (e_i)\in K_0 e_i^{\prime}$, $i=1,2$.
Then there exists $a,d\in\Q_3$ such that $\psi (e_1)=ae_1^{\prime}$ and $\psi (e_2)=de_2^{\prime}$.
Finally, $\psi\circ\phi =\phi^{\prime}\circ\psi$ leads to $a=d$. Denoting by $\psi_K$ the $K$-linear extension
of $\psi$, we see that $\psi_K(\Fil^1D_K)\subseteq\Fil^1D_K^{\prime}$ if and only if $\alpha=\beta$.
\end{proof}

\subsection{\for{toc}{The cubic case}\except{toc}{The cubic case ($e=3$)}}

Let ${E}/{\Q_3}$ be an elliptic curve with semi-stability defect $\dst(E)=3$. There are exactly $9$ totally ramified extensions
of degree $3$ of $\Q_3$ (see~\cite{LMFDB}). Since $E$ acquires good reduction over a degree $3$ Galois extension
of $\Q_3^{\mathrm{un}}$, we are interested in the ones that keep their ramification index after Galois closure.
Indeed, if $e({F}/{\Q_3})=3$ but $e({F^{\Gal}}/{\Q_3})>3$, then $[{(F^{\Gal})}^{\mathrm{un}}:\Q_3^{\mathrm{un}}]>3$ is not minimal.
There are only $4$ such extensions; among these, $3$ are Abelian and the last one has a Galois closure of degree $6$ with
Galois group isomorphic to $S_3$. One easily shows (using the Kronecker-Weber Theorem) that the three considered Abelian
extensions are exactly the degree $3$ totally ramified subextensions of $\Q_3(\zeta_{13},\zeta_9+\zeta_9^{-1})$, so their compositum with $\Q_3^{\mathrm{un}}$ is $\Q_3^{\mathrm{un}}(\zeta_9+\zeta_9^{-1})$, and they are therefore interchangeable. This is not the case of the non Abelian extension. Let $\Lg=\Q_3(\zeta_9+\zeta_9^{-1})$ (resp.\ $\Lng=\Q_3(X^3-3X^2+6)$) be a minimal field of good reduction for ${E}/{\Q_3}$ in the Abelian (resp.\ non Abelian) case.

Given an elliptic curve $E/\Q_3$ and $\ell\neq 3$ a prime, we consider:
\[
\tau_E: I({\Qal_3}/{\Q_3})\overset{\rho_{E,\ell}}{\longrightarrow}\mathrm{GL}_2(\Q_{\ell})\hookrightarrow\mathrm{GL}_2(\mathbb{C}).
\]

\begin{proposition}
Let $E,{E^{\prime}}/{\Q_3}$ be elliptic curves with $\dst(E)=\dst(E^{\prime})=3$. We have the following equivalence:
\[
\tau_E\simeq_{\mathbb{C}}\tau_{E^{\prime}}\Leftrightarrow M_E=M_{E^{\prime}}.
\]
\end{proposition}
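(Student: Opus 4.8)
The plan is to relate the complex representation $\tau_E$ of inertia to the minimal field of good reduction $M_E$ via the fixed field of $\ker\tau_E$. The key observation is that $\tau_E$ is defined through $\rho_{E,\ell}$ restricted to inertia, and by the discussion in \S 2.2 one has $M_E = \Qal_3^{\ker(\rho_{E,\ell}\restriction_{I_L})}$, so that more intrinsically $M_E$ is the fixed field of the kernel of $\tau_E$ (composing $\rho_{E,\ell}$ with the faithful embedding into $\mathrm{GL}_2(\mathbb{C})$ does not change the kernel). Hence the implication $\tau_E\simeq_{\mathbb{C}}\tau_{E^{\prime}}\Rightarrow M_E=M_{E^{\prime}}$ is almost immediate: isomorphic representations have equal kernels, hence the same fixed field; one only needs to note that $\tau_E$ and $\tau_{E^{\prime}}$ both factor through $I({\Qal_3}/{\Q_3})$ and that the fixed field is taken inside the same $\Qal_3^{\mathrm{un}}$-tower, using the interchangeability of good reduction fields recalled in \S 2.2.

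For the converse, suppose $M_E = M_{E^{\prime}}$. First I would record that since $\dst(E)=\dst(E^{\prime})=3$, the image $\tau_E(I({\Qal_3}/{\Q_3}))$ is, by~\cite{Se1},\S 5.6, a cyclic group of order $3$ (wild inertia is cyclic of order $p=3$ here and there is no tame quotient since $3\mid e$ forces the tame part to be trivial in this defect), and likewise for $E^{\prime}$. Both $\tau_E$ and $\tau_{E^{\prime}}$ therefore factor through the common quotient $\Gal(M_E/\Q_3^{\mathrm{un}})\cong\Z/3\Z$, giving two-dimensional complex representations of a cyclic group of order $3$ with determinant $1$ (by condition~$(3)$, $\det\rho_{E,\ell}$ is the cyclotomic character, which is trivial on inertia in the relevant sense, so $\det\tau_E=1$). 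Such a representation is a direct sum of two characters $\chi,\chi^{-1}$ of $\Z/3\Z$; the isomorphism class is determined by the unordered pair $\{\chi,\chi^{-1}\}$, i.e.\ by whether the representation is trivial or faithful. Since $\dst(E)=3$ exactly means $\tau_E$ is faithful (nontrivial), and the same for $E^{\prime}$, both are the unique faithful two-dimensional representation of $\Z/3\Z$ with trivial determinant, hence $\tau_E\simeq_{\mathbb{C}}\tau_{E^{\prime}}$.

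The main obstacle I anticipate is bookkeeping around the non-Abelian versus Abelian distinction: a priori the fixed field $M_E$ could carry more information than the abstract image, because two non-isomorphic faithful actions could produce the same fixed field only if the Galois group of that field over $\Q_3^{\mathrm{un}}$ is non-canonically identified with $\Z/3\Z$. The point to nail down is that for $\dst=3$ the relevant extension $M_E/\Q_3^{\mathrm{un}}$ is genuinely cyclic of degree $3$ (both in the Abelian case, where $M_E=\Q_3^{\mathrm{un}}(\zeta_9+\zeta_9^{-1})$, and in the non-Abelian case, where $M_E=\Lng^{\mathrm{un}}$ and the $S_3$ lives at the level of $\Q_3$ but the inertia-fixed field over $\Q_3^{\mathrm{un}}$ is still degree $3$), so that $\tau_E$ is determined up to isomorphism by its kernel alone — there is only one faithful two-dimensional $\mathbb{C}$-representation of $\Z/3\Z$ with determinant $1$ up to isomorphism. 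Once this structural fact is in place, both implications are formal, and I would only need a short computation to confirm $\det\tau_E=1$ from condition~$(3)$.
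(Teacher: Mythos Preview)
Your proof is correct and follows essentially the same line as the paper's: both directions hinge on $M_E$ being the fixed field of $\ker\tau_E$, and the converse on the observation that a faithful $2$-dimensional complex representation of $\Gal(M_E/\Q_3^{\mathrm{un}})\simeq\Z/3\Z$ satisfying an extra constraint is unique up to isomorphism. The paper pins this down by invoking that the representation is defined over $\Q$ (rational traces), while you use $\det\tau_E=1$; either constraint singles out $\chi\oplus\chi^{-1}$ among the faithful $2$-dimensional representations of $\Z/3\Z$, so the two arguments are interchangeable.

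One small correction: your appeal to condition~$(3)$ is misplaced, since that condition concerns $V_3(E)$ whereas $\tau_E$ is built from $\rho_{E,\ell}$ with $\ell\neq 3$. The fact you need is that the Weil pairing gives $\det\rho_{E,\ell}=\chi_\ell$ (the $\ell$-adic cyclotomic character), which is unramified at $3$ because $\ell\neq 3$; this is what makes $\det\tau_E$ trivial.
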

\begin{proof}
The left to right implication is obvious
since $M_E={(\Q_3^{\mathrm{un}})}^{\ker(\tau_E)}$, $M_{E^{\prime}}={(\Q_3^{\mathrm{un}})}^{\ker(\tau_{E^{\prime}})}$ and two
isomorphic representations share the same kernel. If $M_E=M_E^{\prime}$ then $\ker(\tau_E)=\ker(\tau_{E^{\prime}})$ and
both types factors into faithful irreducible representations of $\Gal(M_{E}/\Q_3^{\mathrm{un}})\simeq\Z/3\Z$ defined over $\Q$,
which are necessarily isomorphic.
\end{proof}

Using~\cite{DFV},Table 1 we see that there are only two isomorphism classes of such objetcs for $p=e=3$ so
that $\Lg\Q_3^{\mathrm{un}}$ and $\Lng\Q_3^{\mathrm{un}}$ are indeed distinct.

\subsubsection{The non Abelian case}
Let ${E}/{\Q_3}$ with $\dst(E)=3$ acquiring good reduction over $\Lng$ with $K=\Lng(\zeta_4)$ its Galois closure.
Denote by $K_0$ the maximal unramified extension of ${K}/{\Q_3}$ and $\sigma\in \Gal({K_0}/{\Q_3})$ the absolute Frobenius.
Let $\omega\in \Gal({K}/{\Q_3})$ be a lifting of $\sigma$ fixing $\Lng$ and $\tau_3$ a generator of $\Gal(K/K_0)=I({K}/{\Q_3})$.
Then,  $\Gal({K}/{\Q_3})=\langle \tau_3\rangle\rtimes \langle \omega \rangle$ with $\tau_3\omega =\omega\tau_3^{-1}$
(the unique non trivial semi-direct product).

Let $\alpha\in\mathcal{M}_3^{\mathrm{na}}:=\{\alpha\in \Lng\vert \tau_3(\alpha)=(3\zeta_4+\alpha)/(1+\zeta_4\alpha)\}$.
We denote by $\Dpcng(3;0;\alpha)$ the filtered $(\phi,\Gal({K}/{\Q_3}))$-module defined by:
\begin{itemize}
	\item $D=K_0e_1\oplus K_0e_2$
	\item $\phi (e_1)=e_2,\ \phi (e_2)=-3e_1$
    \item $M_B(\tau_3)=\begin{pmatrix} -\frac{1}{2} & \frac{3}{2}\zeta_4 \\ \frac{1}{2}\zeta_4 & -\frac{1}{2}\end{pmatrix}$
	\item $\omega (e_1)=e_1,\ \omega (e_2)=e_2$
	\item $\Fil^1D_K=(\alpha\otimes e_1 + 1\otimes e_2)K$.
\end{itemize}
For each $\alpha\in\mathcal{M}_3^{\mathrm{na}}$, the filtered $(\phi,\Gal({K}/{\Q_3}))$-module $\Dpcng(3;0;\alpha)$ satisfies conditions $(1)-(4)$ and is admissible.

\begin{proposition}
Let ${E}/{\Q_3}$ be an elliptic curve with $\dst(E)=3$ acquiring good reduction over $\Lng$ and $\D=\Dcris[K](V_3(E))$.
There exists $\alpha\in\mathcal{M}_3^{\mathrm{na}}$ such that $\D$ and $\Dpcng(3;0;\alpha)$ are isomorphic as
filtered $(\phi,\Gal({K}/{\Q_3}))$-modules. Moreover, if $\alpha,\beta\in\mathcal{M}_3^{\mathrm{na}}$,
then $\Dpcng(3;0;\alpha)\simeq \Dpcng(3;0;\beta)$ if and only if $\alpha =\beta$.
\end{proposition}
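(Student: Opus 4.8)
The plan is to pin down the semilinear $(\phi,\Gal(K/\Q_3))$-module structure of $\D$ first, and only afterwards determine the filtration. Since $V_3(E)$ is crystalline over $K$, the $K_0$-space $D$ (with $K_0=\Q_3(\zeta_4)$) is $2$-dimensional, $I(K/\Q_3)=\Gal(K/K_0)$ acts $K_0$-linearly, and by minimality of $e(K/\Q_3)$ the generator $\tau_3$ has order $3$. Exactly as in the earlier cases, conditions $(2)$ and $(3)$ force $P_{\Char}(\tau_3)(X)\in\Q[X]$ and $\det(\tau_3)=1$, which together with $\tau_3^3=\Id$, $\tau_3\neq\Id$ leaves only $P_{\Char}(\tau_3)(X)=X^2+X+1$; hence $K_0[\tau_3]\cong K_0(\zeta_3)$ and $D$ is free of rank $1$ over it. The Frobenius lift $\omega$ is $\sigma$-semilinear of order $2$, so by descent $D_0:=D^{\langle\omega\rangle}$ is a $2$-dimensional $\Q_3$-space with $D=K_0\otimes_{\Q_3}D_0$, and by $(1)$ the $\Q_3$-linear operator $\phi_0:=\phi\restriction_{D_0}$ has characteristic polynomial $X^2+a_3X+3$.

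Next I would exhibit a hidden quadratic structure. Put $w:=\zeta_4(\tau_3-\tau_3^{-1})\in\operatorname{End}_{\Q_3}(D)$. A short computation from $\tau_3^2+\tau_3+1=0$ gives $w^2=3$, and from $\omega\tau_3\omega^{-1}=\tau_3^{-1}$ together with $\sigma(\zeta_4)=-\zeta_4$ one sees that $w$ commutes with $\omega$, hence stabilises $D_0$; thus $E:=\Q_3[w]\cong\Q_3(\sqrt3)$ acts on $D_0$, which is then a line over $E$. Since $\phi$ is $\sigma$-semilinear and commutes with $\tau_3$, one checks $\phi_0(wx)=-w\,\phi_0(x)$, i.e.\ $\phi_0$ is semilinear over $E$ for the nontrivial automorphism of $E/\Q_3$. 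Consequently $\phi_0^2$ is $E$-linear, hence a scalar: if $\phi_0(v)=\rho v$ for an $E$-basis $v$ and $\rho\in E^\times$, then $\phi_0^2=N_{E/\Q_3}(\rho)\,\Id$. Comparing with Cayley--Hamilton $\phi_0^2=-a_3\phi_0-3\,\Id$ and using that $\phi_0$ cannot be a scalar (its characteristic polynomial has no root in $\Q_3$), I get $a_3=0$ and $N_{E/\Q_3}(\rho)=-3$; in particular the supersingular traces $\pm3$ are excluded here.

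Then I would normalise. Since $N_{E/\Q_3}(w)=w\cdot(-w)=-3=N_{E/\Q_3}(\rho)$, Hilbert's Theorem $90$ for $E/\Q_3$ yields $\mu\in E^\times$ with $\bar\mu\mu^{-1}=w\rho^{-1}$, and after replacing $v$ by $v':=\mu v$ one has $\phi_0(v')=wv'$. Putting $e_1:=v'$, $e_2:=wv'$ gives a $K_0$-basis of $D$ fixed by $\omega$ with $\phi(e_1)=e_2$ and $\phi(e_2)=-w\,\phi_0(v')=-w^2v'=-3e_1$; and since $v'$ is simultaneously a $K_0[\tau_3]$-basis of $D$, expanding the operator $\tau_3$ (multiplication by a root of $X^2+X+1$) in the $K_0$-basis $\{1,w\}$ of $K_0[\tau_3]$ produces $M_B(\tau_3)=\bigl(\begin{smallmatrix}-1/2&3\zeta_4/2\\\zeta_4/2&-1/2\end{smallmatrix}\bigr)$, after if necessary replacing the chosen generator $\tau_3$ of $I(K/\Q_3)$ by $\tau_3^{-1}$ (a harmless relabeling fixing the sign of the off-diagonal entries). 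So the module underlying $\D$ is that of $\Dpcng(3;0;-)$. For the filtration, $\tau_3$ has no $K_0$-stable line and $\phi_0$ has irreducible characteristic polynomial over $\Q_3$, so $\D$ has no one-dimensional sub-$(\phi,\Gal(K/\Q_3))$-module and every Hodge--Tate $(0,1)$ filtration on it is admissible; since $Ke_1$ is not $\Gal(K/\Q_3)$-stable, $\Fil^1D_K=(\alpha\otimes e_1+1\otimes e_2)K$ for a unique $\alpha\in K$, and $\Gal(K/\Q_3)$-stability translates into $\alpha\in\Lng$ (from $\omega$) together with $\tau_3(\alpha)=(3\zeta_4+\alpha)/(1+\zeta_4\alpha)$ (from $\tau_3$), that is, $\alpha\in\mathcal{M}_3^{\mathrm{na}}$. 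Hence $\D\simeq\Dpcng(3;0;\alpha)$.

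Finally, for the uniqueness clause I would take a nonzero morphism $\psi\colon\Dpcng(3;0;\alpha)\to\Dpcng(3;0;\beta)$ with matrix $P$ in the standard bases: $\omega$-equivariance forces $P\in\mathrm{M}_2(\Q_3)$ (it sends $D_0$ into $D_0'$), $\tau_3$-equivariance forces $P$ into the centraliser of $M_B(\tau_3)$ in $\mathrm{M}_2(\Q_3)$, namely $\Q_3[A_0]$ with $A_0=\bigl(\begin{smallmatrix}0&3\\1&0\end{smallmatrix}\bigr)$, and $\phi$-equivariance forces $P$ to centralise $\bigl(\begin{smallmatrix}0&-3\\1&0\end{smallmatrix}\bigr)$; these two quadratic $\Q_3$-subalgebras of $\mathrm{M}_2(\Q_3)$ meet only in $\Q_3\,\Id$, so $\psi$ is scalar and respects the filtrations iff $\alpha=\beta$. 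I expect the main obstacle to be the two middle paragraphs: seeing that the commutation relations make $D_0$ a line over $\Q_3(\sqrt3)$ on which $\phi_0$ is conjugate-semilinear is what simultaneously forces $a_3=0$ and, via Hilbert $90$, yields the rigid normal form — and the order of normalisation matters, since once $M_B(\tau_3)$ has been pinned down only scalars commute with it over $\Q_3$, leaving no freedom to adjust $\phi$ afterwards.
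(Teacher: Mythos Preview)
Your argument is correct and follows a genuinely different, more structural route than the paper. The paper begins with an $\omega$-fixed basis in which $\phi$ is already in companion form $\left(\begin{smallmatrix}0&-3\\1&-a_3\end{smallmatrix}\right)$, writes out the general matrix of $\tau_3$, and then reads off $a_3=0$ together with the constraints $\sigma(\lambda)=-\lambda-1$, $\sigma(\mu)=-\mu$, $P(\lambda)+3\mu^2=0$ from the relations $\tau_3\phi=\phi\tau_3$ and $\tau_3\omega=\omega\tau_3^{-1}$; the normalisation of $\tau_3$ is then achieved by an ad~hoc change of basis built from a $\Gal(K/\Q_3)$-fixed vector in the kernel of an explicit rank-one matrix. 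Your device $w=\zeta_4(\tau_3-\tau_3^{-1})$ turns $D_0$ into a line over $E\simeq\Q_3(\sqrt3)$ on which $\phi_0$ is conjugate-semilinear, so $\phi_0^2$ is automatically an $E$-scalar; this gives a conceptual explanation of why $a_3=0$ is forced, and Hilbert~90 replaces the paper's explicit kernel computation by a one-line normalisation. The trade-off is that the paper's computation is entirely self-contained, whereas your approach reveals the underlying quadratic algebra and would adapt more readily to analogous situations. One small correction: the ``relabeling'' of $\tau_3$ by $\tau_3^{-1}$ is not quite legitimate, since $\tau_3$ is a fixed element of $\Gal(K/\Q_3)$ in the definition of $\Dpcng(3;0;\alpha)$; what actually repairs the sign is the further change of basis $(e_1,e_2)\mapsto(\phi(e_1),\phi(e_2))=(e_2,-3e_1)$, which preserves the form of $\phi$ and $\omega$ while conjugating $M_B(\tau_3)$ to the target matrix (equivalently, solve $\bar\mu\mu^{-1}=-w\rho^{-1}$ instead of $w\rho^{-1}$ in the Hilbert~90 step).
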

\begin{proof}
Denote by $D$ the underlying $K_0$-vector space associated to $\D$, the element $\tau_3$ acts $K_0$-linearly over $D$ and
the morphism
\[
I({K}/{\Q_3})\longrightarrow\Aut_{K_0}(D)
\]
is injective by minimality of $e({K}/{\Q_3})$. We identify $\tau_3$ to its image in $\Aut_{K_0}(D)$, it is an element of order $3$.
Since $\zeta_3\notin K_0$,
\[
P_{\Char}(\tau_3)(X)=P_{\min}(\tau_3)(X)=X^2+X+1.
\]
Let $B=(e_1,e_2)$ be a $K_0$-basis of $D$ fixed by $\omega$ such that $\phi(e_1)=e_2$ and $\phi(e_2)=-3e_1-a_3e_2$.
Such a basis always exists since $\omega$ acts semi-linearly over $D$ and $\phi\omega=\omega\phi$.
Let $\lambda,\mu,\lambda^{\prime},\mu^{\prime}\in K_0$ such that
\[
M_B(\tau_3)=\begin{pmatrix} \lambda & \mu^{\prime} \\ \mu & \lambda^{\prime}\end{pmatrix}.
\]
We already know that $\lambda^{\prime}=-\lambda-1$ and $\mu^{\prime}=P(\lambda)(-\mu)$ where $P=P_{\Char}(\tau_3)$.
The relations $\tau\omega=\omega\tau^{-1}$ and $\tau\phi=\phi\tau$ imply that $a_3=0$,
$\sigma(\lambda)=-\lambda-1$, $\sigma(\mu)=-\mu$ and $P(\lambda)/(-\mu)=3\mu$.

In conclusion:
\begin{itemize}
	\item $\phi(e_1)=e_2,\ \phi(e_2)=-3e_1$
	\item $\omega(e_1)=e_1,\ \omega(e_2)=e_2$
	\item $M_B(\tau_3)=\begin{pmatrix} \lambda & 3\mu \\ \mu & -\lambda-1\end{pmatrix}$,$\ \lambda\in -\frac{1}{2}+\Q_3\zeta_4,\ \mu\in\Q_3^{\times}\zeta_4,\text{ and } P(\lambda)+3\mu^2=0$.
\end{itemize}
Let
\[
M=\begin{pmatrix} \lambda +\frac{1}{2} & -3\mu -\frac{3}{2}\zeta_4 \\ \mu -\frac{1}{2}\zeta_4 & \lambda +\frac{1}{2}\end{pmatrix}
\]
clearly $\det(M)=0$, let $(a,b)\in{\ker(M)}^{G({K}/{\Q_3})}\subseteq K_0^2$ be a non zero element. Then
\[
B^{\prime}=(e_1^{\prime},e_2^{\prime})=(ae_1+be_2, -3be_1+ae_2)
\]
is a $K_0$-basis of $D$ such that
\begin{itemize}
	\item $\phi(e_1^{\prime})=e_2^{\prime},\ \phi(e_2^{\prime})=-3e_1^{\prime}$
	\item $\omega(e_1^{\prime})=e_1^{\prime},\ \omega(e_2^{\prime})=e_2^{\prime}$
	\item $M_{B^{\prime}}(\tau_3)=\begin{pmatrix} -\frac{1}{2} & \frac{3}{2}\zeta_4 \\ \frac{1}{2}\zeta_4 & -\frac{1}{2}\end{pmatrix}$.
\end{itemize}
Again, we denote by $(e_1,e_2)$ such a basis.
One easily checks that $(1\otimes e_1)K$ and $(1\otimes e_2)K$ are not stable by $\tau_3$.
Let $\alpha\in K^{\times}$ and $\Fil^1 D_K =(\alpha\otimes e_1 + 1\otimes e_2)$.
A simple calculation shows that such a $K$-line is stable by the action of $\Gal({K}/{\Q_3})$ if and only if $\alpha\in\Lng$ and
$\tau_3(\alpha)=(3\zeta_4+\alpha)/(1+\zeta_4\alpha)$.

Let $B,B^{\prime}$ be $K_0$-basis of $\D=\Dpcng(3;0;\alpha)$ and $\D^{\prime}=\Dpcng(3;0;\beta)$ respectively.
One easily shows that an isomorphism $\eta$ of $(\phi,\Gal({K}/{\Q_3}))$-modules between $\D$ and $\D^{\prime}$ is of the form
\[
M_{B,B^{\prime}}(\eta)=\begin{pmatrix} a & 0 \\ 0 & a\end{pmatrix},\ a\in\Q_3^{\times}.
\]
Denoting by $\eta_K:D_K\longrightarrow D_K^{\prime}$ the $K$-linear extension of $\eta$, it is then clear that
\[
\eta_K((\alpha\otimes e_1 +1\otimes e_2)K)\subseteq (\beta\otimes e_1 +1\otimes e_2)K \quad\Leftrightarrow\quad\alpha=\beta .
\]
\end{proof}

\subsubsection{The Abelian case}
Let ${E}/{\Q_3}$ with $\dst(E)$ acquiring good reduction over a degree $3$ Abelian extension of $\Q_3$.
There are only $4$ such extensions and among them the unique unramified one. These are exactly the sub-extensions
of $\Q_3(\zeta_{13},\zeta_9+\zeta_9^{-1})$, hence they share the same maximal unramified extension inside $\Qal_3$.
Let $K=\Lg$ be one of these $3$ extensions, its Galois group $\Gal({K}/{\Q_3})=I({K}/{\Q_3})=\langle \tau_3\rangle$ is cyclic of order $3$.

Let $\alpha\in\mathcal{M}_3^{\mathrm{a}}:=\{ \alpha\in \Lg\vert\tau_3(\alpha)=(\alpha -1)/\alpha\}$ and
\[
    (a,\mu)\in(\{-3\}\times\{1,2\})\sqcup (\{0\}\times\{-1,1\})\sqcup (\{3\}\times\{-2,-1\}).
\]
We denote by $\Dpcg(3;a,\mu;\alpha)$ the filtered $(\phi,\Gal({K}/{\Q_3}))$-module defined by:
\begin{itemize}
	\item $D=\Q_3 e_1\oplus \Q_3 e_2$,
	\item $M_B(\tau_3)=\begin{pmatrix} 0 & -1 \\ 1 & -1\end{pmatrix}$
    \item $a=-3$: $\mu=1$: $M_B(\phi)=\begin{pmatrix} 1 & 1 \\ -1 & 2\end{pmatrix}$,$\ $ $\mu=2$: $M_B(\phi)=\begin{pmatrix} 2 & -1 \\ 1 & 1\end{pmatrix}$
	\item $a=0$: $\mu=1$: $M_B(\phi)=\begin{pmatrix} 1 & -2 \\ 2 & -1\end{pmatrix}$,$\ $ $\mu=-1$: $M_B(\phi)=\begin{pmatrix} -1 & 2 \\ -2 & 1\end{pmatrix}$
	\item $a=3$: $\mu=-1$: $M_B(\phi)=\begin{pmatrix} -1 & -1 \\ 1 & -2\end{pmatrix}$,$\ $ $\mu=-2$: $M_B(\phi)=\begin{pmatrix} -2 & 1 \\ -1 & -1\end{pmatrix}$
	\item $\Fil^1D_K=(\alpha\otimes e_1+1\otimes e_2)K$.
\end{itemize}
For each $\alpha\in\mathcal{M}_3^{\mathrm{a}}$, the filtered $(\phi,\Gal({K}/{\Q_3}))$-module $\Dpcg(3;a,\mu;\alpha)$ satisfies conditions $(1)-(4)$ and is admissible.

\begin{proposition}
Let ${E}/{\Q_3}$ be an elliptic curve with $\dst(E)=3$ acquiring good reduction over $K$ and $\D=\Dcris[K](V_3(E))$.
There exists $\alpha\in\mathcal{M}_3^{\mathrm{a}}$
and $(a,\mu)\in(\{-3\}\times\{1,2\})\sqcup (\{0\}\times\{-1,1\})\sqcup (\{3\}\times\{-2,-1\})$ such that $\D$
and $\Dpcg(3;a,\mu;\alpha)$ are isomorphic as filtered $(\phi,\Gal({K}/{\Q_3}))$-modules.

Moreover, if $\alpha,\beta\in\mathcal{M}_3^{\mathrm{a}}$
and $(a,\mu),(b,\nu)\in(\{-3\}\times\{1,2\})\sqcup(\{0\}\times\{-1,1\})\sqcup(\{3\}\times\{-2,-1\})$,
then $\Dpcg(3;a,\mu;\alpha)\simeq \Dpcg(3;b,\nu;\beta)$ if and only if $(a,\mu) =(b,\nu)$.
\end{proposition}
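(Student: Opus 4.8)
The plan is to follow the same template as the non-Abelian cubic case, adapted to the cyclic setting. First I would let $D$ be the underlying $\Q_3$-vector space of $\D=\Dcris[K](V_3(E))$, noting that since $K/K_0=K$ the module $D$ is already a $\Q_3$-vector space and $\tau_3$ acts $\Q_3$-linearly. By minimality of $e(K/\Q_3)$ the map $I(K/\Q_3)\to\Aut_{\Q_3}(D)$ is injective, so $\tau_3$ has order $3$; since $\zeta_3\notin\Q_3$ we get $P_{\Char}(\tau_3)(X)=P_{\min}(\tau_3)(X)=X^2+X+1$. Hence there is a $\Q_3$-basis $B=(e_1,e_2)$ with $M_B(\tau_3)=\bigl(\begin{smallmatrix}0&-1\\1&-1\end{smallmatrix}\bigr)$. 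Because $\phi$ commutes with $\tau_3$, the matrix $M_B(\phi)$ lies in the centralizer $C(M_B(\tau_3))=\Q_3[M_B(\tau_3)]$ (Double Centralizer Theorem, as $P_{\min}(\tau_3)$ is irreducible over $\Q_3$), so $M_B(\phi)=\lambda\Id+\mu M_B(\tau_3)$ for some $\lambda,\mu\in\Q_3$. Conditions $(1)$ and $(3)$ give $\det\phi=3$ and $\Tr(\phi_0)=-a_3$ with $P_{\Char}(\phi_0)(X)=X^2+a_3X+3$; translating these into equations for $(\lambda,\mu)$ via the explicit $2\times2$ matrix $\lambda\Id+\mu\bigl(\begin{smallmatrix}0&-1\\1&-1\end{smallmatrix}\bigr)=\bigl(\begin{smallmatrix}\lambda&-\mu\\\mu&\lambda-\mu\end{smallmatrix}\bigr)$ yields $\det=\lambda^2-\lambda\mu+\mu^2=3$ and $\Tr=2\lambda-\mu=-a_3$. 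Solving this pair over $\Q_3$ (equivalently over $\Z_3$ by integrality) gives precisely the finite list: $a_3=-3$ forces $\mu\in\{1,2\}$, $a_3=0$ forces $\mu\in\{-1,1\}$, $a_3=3$ forces $\mu\in\{-2,-1\}$, and the other supersingular values $\pm\sqrt{\cdot}$ — more precisely one checks $a_3=\pm1,\pm2$ give no solution and $a_3=\pm3,0$ give the stated $\mu$'s — reproducing exactly the six matrices $M_B(\phi)$ listed. This also shows, as in the $e=4,3^{\mathrm{na}}$ cases, that only supersingular traces occur.

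Next I would handle the filtration. Since $\Fil^1 D_K$ is a $\Gal(K/\Q_3)$-stable $K$-line in $D_K=K\otimes_{\Q_3}D$, and $\phi$ has no nontrivial stable subspace (the fixed list of $M_B(\phi)$ all have irreducible characteristic polynomial over $\Q_3$, hence no $\Q_3$-rational eigenline, and $D_K^{\Gal}$ is the $\Q_3$-span of $B$), every Hodge–Tate $(0,1)$ filtration is admissible. One checks that $(1\otimes e_1)K$ and $(1\otimes e_2)K$ are not $\tau_3$-stable, so every such line has the form $(\alpha\otimes e_1+1\otimes e_2)K$ for a unique $\alpha\in K$; imposing $\tau_3$-stability and using $M_B(\tau_3)$ gives exactly the condition $\tau_3(\alpha)=(\alpha-1)/\alpha$, i.e. $\alpha\in\mathcal{M}_3^{\mathrm{a}}$. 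This produces the isomorphism $\D\simeq\Dpcg(3;a,\mu;\alpha)$ for appropriate $(a,\mu)$ and $\alpha$.

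For the final uniqueness statement I would take a nonzero morphism $\eta\colon\Dpcg(3;a,\mu;\alpha)\to\Dpcg(3;b,\nu;\beta)$ of filtered $(\phi,\Gal(K/\Q_3))$-modules. Commuting with $\tau_3$ forces $M_{B,B'}(\eta)\in\Q_3[M_B(\tau_3)]$, a field isomorphic to $\Q_3(\zeta_3)$, so $\eta$ is either $0$ or invertible; commuting with $\phi$ then forces $M_B(\phi)$ and $M_{B'}(\phi')$ to be conjugate \emph{by an element of this same field}, which (since conjugation by $\Q_3[M_B(\tau_3)]$ fixes $M_B(\tau_3)$ and is an inner automorphism of the field, hence trivial on it, and acts on $\Q_3[M_B(\tau_3)]$ trivially) pins down $(a,\mu)=(b,\nu)$ among the six listed matrices — here one must verify that the six matrices are pairwise non-conjugate by $\mathrm{GL}_2(\Q_3)$ elements commuting with $M_B(\tau_3)$, which reduces to comparing the pair $(a_3,\mu)$ since $\mu$ is read off as the $M_B(\tau_3)$-coefficient. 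Finally, writing $\eta$ as multiplication by a scalar in $\Q_3^\times$ (the center), $\eta_K$ sends $(\alpha\otimes e_1+1\otimes e_2)K$ to $(\alpha\otimes e_1+1\otimes e_2)K$, and comparing with $(\beta\otimes e_1+1\otimes e_2)K$ gives $\alpha=\beta$. The main obstacle is the bookkeeping in the first paragraph: one must carefully solve $\lambda^2-\lambda\mu+\mu^2=3$, $2\lambda-\mu=-a_3$ over $\Z_3$, check which integer values of $a_3$ with $|a_3|_\infty\le2\sqrt3$ actually admit solutions, and confirm the normalization $\phi(e_1)=e_2$ used in the display is compatible — i.e. that after a further base change within $C(M_B(\tau_3))$ one may assume $M_B(\phi)$ is exactly the stated matrix rather than merely conjugate to it; this is the same subtlety that appears in the $e=4$ and non-Abelian $e=3$ proofs and is resolved the same way, by absorbing a scalar from $C(M_B(\tau_3))^\times$.
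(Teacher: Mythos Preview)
Your approach to the existence part is essentially the paper's, modulo a harmless change of variables (your $\mu$ is the off-diagonal parameter $d$ in $M_B(\phi)=cI+dM_B(\tau_3)$, whereas the paper's parameter $\lambda$ and the notation's $\mu$ are both the $(1,1)$-entry $c$; this is why your solved values don't literally match the listed sets).

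The genuine gap is in the last paragraph, and it makes you prove the \emph{wrong} statement. You claim that once $(a,\mu)=(b,\nu)$, a morphism $\eta$ commuting with $\tau_3$ and $\phi$ must be a scalar in $\Q_3^\times$, and hence $\alpha=\beta$. But the centralizer $\Q_3[M_B(\tau_3)]\cong\Q_3(\zeta_3)$ is a \emph{commutative} field containing $M_B(\phi)$, so when $M_B(\phi)=M_{B'}(\phi')$ the condition $\eta\phi=\phi'\eta$ is automatic for every $\eta$ in this field, not just scalars. Thus $\eta$ ranges over a two-dimensional $\Q_3$-space, and you have an extra degree of freedom. Read the proposition again: it asserts that $\Dpcg(3;a,\mu;\alpha)\simeq\Dpcg(3;b,\nu;\beta)$ iff $(a,\mu)=(b,\nu)$ --- the isomorphism class is \emph{independent} of $\alpha$. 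Your argument, if it worked, would give the opposite conclusion.

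What is missing is the ``if'' direction: given $(a,\mu)=(b,\nu)$ and arbitrary $\alpha,\beta\in\mathcal{M}_3^{\mathrm{a}}$, you must \emph{produce} an $\eta=\bigl(\begin{smallmatrix}c&-d\\d&c-d\end{smallmatrix}\bigr)\in\Q_3[M_B(\tau_3)]^\times$ with $\eta_K\bigl((\alpha\otimes e_1+1\otimes e_2)K\bigr)=(\beta\otimes e_1+1\otimes e_2)K$. The paper does this explicitly: when $\alpha\neq\beta$ one checks $(\alpha\beta-\beta+1)/(\alpha-\beta)\in\Q_3$ (this uses that both $\alpha,\beta$ satisfy the same M\"obius relation under $\tau_3$) and takes $d\neq 0$, $c=d(\alpha\beta-\beta+1)/(\alpha-\beta)$. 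This is exactly the step analogous to the supersingular $e=1$ case, where the irreducible Frobenius collapsed all filtrations to one class; here the irreducible $\tau_3$ plays that role instead.
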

\begin{proof}
Denote by $D$ the underlying $\Q_3$-vector space associated to $\D$, the element $\tau_3$ acts $\Q_3$-linearly over $D$ and
the natural morphism
\[
I({K}/{\Q_3})\longrightarrow\Aut_{\Q_3}(D)
\]
is injective by minimality of $e({K}/{\Q_3})$. We identify $\tau_3$ to its image in $\Aut_{\Q_3}(D)$, it is an element of order $3$.
Since $\zeta_3\notin \Q_3$,
\[
P_{\Char}(\tau_3)(X)=P_{\min}(\tau_3)(X)=X^2+X+1.
\]
Let $B=(e_1,e_2)$ be a $\Q_3$-basis of $D$ such that:
\[
M_B(\tau_3)=\begin{pmatrix} 0 & -1 \\ 1 & -1 \end{pmatrix}.
\]
Since $P_{\Char}(\phi)(X)=X^2+a_3X+3$ and $\phi\tau_3=\tau_3\phi$, we have
\[
M_B(\phi)=\begin{pmatrix} \lambda & -2\lambda-a_3 \\ 2\lambda+a_3 & -\lambda-a_3 \end{pmatrix},\ \lambda\in\Q_3
\]
with $\det(\phi)=3\lambda^2+3\lambda a_3+a_3^2=3$, i.e. $\lambda$ is a root of $3X^2+3a_3X+a_3^2-3$. But this polynomial
has roots in $\Q_3$ if and only if $3\mid a_3$, so $a_3\in\{-3,0,3\}$. Considering every possible value of $a_3$ we obtain:
\begin{itemize}
	\item if $a_3=0$, $\lambda$ is a root of $X^2-1$ i.e. $\lambda\in\{-1,1\}$
	\item if $a_3=3$, $\lambda$ is a root of $X^2+3X+2$ i.e. $\lambda\in\{-2,-1\}$
	\item if $a_3=-3$, $\lambda$ is a root of $X^2-3X+2$ i.e. $\lambda\in\{1,2\}$.
\end{itemize}
Observe that $(1\otimes e_1)K$ and $(1\otimes e_2)K$ are not stable by action of $G({K}/{\Q_3})$. Let $\alpha\in K^{\times}$,
the $K$-line $(\alpha\otimes e_1+1\otimes e_2)K$ is stable by $\tau_3$ if and only if $\tau_3(\alpha)=(\alpha -1)/\alpha$.
So that $\D\simeq\Dpcg(3;a_3,\lambda;\alpha)$ with $\alpha$ and $(a_3,\lambda)$ satisfying the desired conditions.

Let $\alpha,\beta\in\mathcal{M}_3^{\mathrm{a}}$
and $(a,\mu),(b,\nu)\in\{-3\}\times\{1,2\}\sqcup\{0\}\times\{-1,1\}\sqcup\{3\}\times\{-2,-1\}$.
Consider $\D=\Dpcg(3;a,\mu;\alpha)$ and $\D^{\prime}=\Dpcg(3;b,\nu;\beta)$, we will first show that their
underlying $(\phi,\Gal({K}/{\Q_3}))$-modules are not isomorphic. Let $B$ and $B^{\prime}$ be $\Q_3$-basis of $D$ and $D^{\prime}$
respectively. A morphism $\eta:D\longrightarrow D^{\prime}$ commuting to $\tau_3$ and $\phi$ must be of the form
\[
M_{B,B^{\prime}}(\eta)=\begin{pmatrix} c & -d \\ d & c-d\end{pmatrix}
\]
where $(c,d)\in\Q_3^2$ is in the kernel of the following linear map
\[
\begin{pmatrix} \lambda -\mu & 2(\mu-\lambda)+b-a \\ \mu-\lambda +b-a & \mu-\lambda\end{pmatrix}.
\]
The determinant of this matrix is $-(3{(\mu-\lambda)}^2+3(\mu-\lambda)(b-a)+{(b-a)}^2)$. There exists $(c,d)\neq (0,0)$ in
the kernel if and only if $(\mu-\lambda)$ is a root of
\[
3X^2+3(b-a)X+{(b-a)}^2.
\]
But such a polynomial has roots in $\Q_3$ if and only if $a=b$, in which case its roots are zero. This shows that if $\D$
and $\D^{\prime}$ are isomorphic as $(\phi,\Gal({K}/{\Q_3}))$-modules then $\lambda=\mu$ and $a=b$. Now suppose
that $(\lambda,a)=(\mu,b)$, let $\Fil^1 D_K=(\alpha\otimes e_1+1\otimes e_2)K$
and $\Fil^1 D^{\prime}_K=(\beta\otimes e_1+1\otimes e_2)K$, the $K$-lines defining the filtrations on $\D$
and $\D^{\prime}$. If $\alpha=\beta$ taking $c=1$ and $d=0$ gives us an obvious isomorphism. In the other case,
we see that $(\alpha\beta -\beta +1)/(\alpha-\beta)\in\Q_3$ and taking some $d\neq 0$
and $c=d(\alpha\beta -\beta +1)/(\alpha-\beta)$ gives us the desired isomorphism.
\end{proof}

\begin{remark}
We observe two differences with the non Abelian case: the supersingular traces $3$ and $-3$ do appear and for each trace value
there are two isomorphism classes of $(\phi,\Gal({K}/{\Q_3}))$-modules (not considering filtration). We will explain the absence
of these traces in the section~\ref{S5}. These two isomorphism classes are unramified quadratic twists of each other.
\end{remark}

\subsection{\for{toc}{The sextic case}\except{toc}{The sextic case ($e=6$)}}

This section can be summarized by the following result: if ${E}/{\Q_3}$ has a semi-stability defect of $3$ then its
quadratic twist ${E^{\prime}}/{\Q_3}$ by the character associated to $\sqrt{3}$ has a semi-stability defect of $6$, and vice versa.
Consequently, if ${F}/{\Q_3}$ is a field of good reduction for $E$, then $F(\sqrt{3})$ is a field of good reduction for $E^{\prime}$. 

\subsubsection{The non Abelian case}
Let ${E}/{\Q_3}$ with $\dst(E)=6$ acquiring good reduction over $\Lng(\sqrt{3})$ and let $K=\Lng(\sqrt{3},\zeta_4)$ be its
Galois closure. We have $\Gal({K}/{\Q_3})=(\langle\tau_3\rangle\times\langle\tau_2\rangle)\rtimes\langle\omega\rangle$
and $I({K}/{\Q_3})=\langle\tau_3\rangle\times\langle\tau_2\rangle$ is cyclic of order $6$.

Let $\alpha\in\mathcal{M}_6^{\mathrm{na}}=\{ \alpha\in \Lng(\sqrt{3})\vert \tau_3(\alpha)=(3\zeta_4+\alpha)(1+\zeta_4\alpha)\}$.
We denote by $\Dpcng(6;0;\alpha)$ the filtered $(\phi,\Gal({K}/{\Q_3}))$-module defined by:
\begin{itemize}
	\item $D=K_0e_1\oplus K_0e_2$
	\item $\phi (e_1)=e_2,\ \phi (e_2)=-3e_1$
	\item $M_B(\tau_3)=\begin{pmatrix} -1/2 & 3/2\zeta_4 \\ 1/2\zeta_4 & -1/2\end{pmatrix}$
	\item $M_B(\tau_2)=\begin{pmatrix} -1 & 0 \\ 0 & -1\end{pmatrix}$
	\item $\omega (e_1)=e_1,\ \omega (e_2)=e_2$
	\item $\Fil^1D_K=(\alpha\otimes e_1 + 1\otimes e_2)K$.
\end{itemize}
For each $\alpha\in\mathcal{M}_6^{\mathrm{na}}$, the filtered $(\phi,\Gal({K}/{\Q_3}))$-module $\Dpcng(6;0;\alpha)$ satisfies
    conditions $(1)-(4)$ and is admissible.

\begin{proposition}
Let ${E}/{\Q_3}$ be an elliptic curve with $\dst(E)=6$ acquiring good reduction over $\Lng(\sqrt{3})$ and $\D=\Dcris[K](V_3(E))$.
There exists $\alpha\in\mathcal{M}_6^{\mathrm{na}}$ such that $\D$ and $\Dpcng(6;0;\alpha)$ are isomorphic as
filtered $(\phi,\Gal({K}/{\Q_3}))$-modules. Moreover if $\alpha,\beta\in\mathcal{M}_6^{\mathrm{na}}$,
then $\Dpcng(6;0;\alpha)\simeq \Dpcng(6;0;\beta)$ if and only if $\alpha =\beta$.
\end{proposition}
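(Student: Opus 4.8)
The plan is to mimic closely the structure of the non Abelian cubic case ($e=3$), exploiting the sextic case's description as a ramified quadratic twist. First I would set up notation: let $D$ be the underlying $K_0$-vector space attached to $\D=\Dcris[K](V_3(E))$, and observe that $I({K}/{\Q_3})=\langle\tau_3\rangle\times\langle\tau_2\rangle$ injects into $\Aut_{K_0}(D)$ by minimality of $e({K}/{\Q_3})$. Since $\D$ satisfies conditions $(2)$ and $(3)$, the element $\tau_2$ has characteristic polynomial in $\Q[X]$ with determinant $1$ and order dividing $2$; as before this forces $P_{\Char}(\tau_2)(X)=(X+1)^2$, hence $\tau_2=-\Id$. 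Likewise $\tau_3$ has order $3$ and, since $\zeta_3\notin K_0$, $P_{\Char}(\tau_3)(X)=P_{\min}(\tau_3)(X)=X^2+X+1$.

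Next I would choose a $K_0$-basis $B=(e_1,e_2)$ fixed by $\omega$ with $\phi(e_1)=e_2$, $\phi(e_2)=-3e_1-a_3e_2$ (such a basis exists because $\omega$ acts $\sigma$-semilinearly and commutes with $\phi$, via descent). Writing $M_B(\tau_3)=\left(\begin{smallmatrix}\lambda & \mu' \\ \mu & \lambda'\end{smallmatrix}\right)$, the relation $X^2+X+1$ gives $\lambda'=-\lambda-1$ and $\mu'=-\mu^{-1}(\lambda^2+\lambda+1)$ (more precisely $\mu'=P(\lambda)/(-\mu)$), and the commutation relations $\tau_3\omega=\omega\tau_3^{-1}$ and $\tau_3\phi=\phi\tau_3$ force $a_3=0$, $\sigma(\lambda)=-\lambda-1$, $\sigma(\mu)=-\mu$ and $P(\lambda)+3\mu^2=0$; so $\lambda\in-\tfrac12+\Q_3\zeta_4$ and $\mu\in\Q_3^{\times}\zeta_4$. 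Then, exactly as in the $e=3$ non Abelian case, I form the singular matrix $M=\left(\begin{smallmatrix}\lambda+\tfrac12 & -3\mu-\tfrac32\zeta_4 \\ \mu-\tfrac12\zeta_4 & \lambda+\tfrac12\end{smallmatrix}\right)$, pick a nonzero $(a,b)\in(\ker M)^{\Gal({K}/{\Q_3})}\subseteq K_0^2$, and set $e_1'=ae_1+be_2$, $e_2'=-3be_1+ae_2$. This new basis is still fixed by $\omega$, still satisfies $\phi(e_1')=e_2'$, $\phi(e_2')=-3e_1'$, and now $M_{B'}(\tau_3)=\left(\begin{smallmatrix}-\tfrac12 & \tfrac32\zeta_4 \\ \tfrac12\zeta_4 & -\tfrac12\end{smallmatrix}\right)$; since $\tau_2=-\Id$ is basis-independent, $M_{B'}(\tau_2)=\left(\begin{smallmatrix}-1 & 0 \\ 0 & -1\end{smallmatrix}\right)$ too. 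Renaming this basis $(e_1,e_2)$ we have pinned down the $(\phi,\Gal({K}/{\Q_3}))$-module structure entirely.

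For the filtration, I would check that neither $(1\otimes e_1)K$ nor $(1\otimes e_2)K$ is $\tau_3$-stable, so $\Fil^1 D_K=(\alpha\otimes e_1+1\otimes e_2)K$ for some $\alpha\in K^{\times}$; a direct computation with $M_B(\tau_3)$ shows this line is $\Gal({K}/{\Q_3})$-stable precisely when $\alpha$ lies in the fixed field of $\omega$, namely $\Lng(\sqrt3)$, and $\tau_3(\alpha)=(3\zeta_4+\alpha)/(1+\zeta_4\alpha)$ --- i.e. $\alpha\in\mathcal{M}_6^{\mathrm{na}}$ --- and that $\tau_2=-\Id$ imposes no extra condition. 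Admissibility is automatic since $\phi$ has no nontrivial stable $K_0$-subspace and $t_H(\D)=t_N(\D)=1$. This gives $\D\simeq\Dpcng(6;0;\alpha)$. For the rigidity statement, an isomorphism $\eta$ of $(\phi,\Gal({K}/{\Q_3}))$-modules between $\Dpcng(6;0;\alpha)$ and $\Dpcng(6;0;\beta)$ must commute with $\omega$ (so $\eta$ is defined over $\Q_3$ on the $\omega$-fixed basis), with $\tau_3$ (so it is scalar, using that $\tau_3$ has distinct eigenvalues over $K_0$ and the centralizer computation), and with $\phi$, forcing $M_{B,B'}(\eta)=\left(\begin{smallmatrix}a & 0 \\ 0 & a\end{smallmatrix}\right)$ with $a\in\Q_3^{\times}$; then $\eta_K(\Fil^1 D_K)\subseteq\Fil^1 D_K'$ holds iff $\alpha=\beta$. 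The main obstacle is the bookkeeping in the commutation-relation step that forces $a_3=0$ and the exact shape of $M_B(\tau_3)$ --- verifying that the extra factor $\tau_2=-\Id$ genuinely decouples and contributes nothing beyond an overall sign, so that the sextic computation reduces cleanly to the cubic one; but since $-\Id$ is central and basis-invariant, this should go through verbatim.
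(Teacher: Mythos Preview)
Your proposal is correct and follows precisely the route the paper indicates: the paper's own proof is the single sentence ``Similar to the cubic non Abelian case using the natural injection $I({K}/{\Q_3})=\langle\tau_2\rangle\times\langle\tau_3\rangle\hookrightarrow\Aut_{K_0}(D)$,'' and you have carried out exactly that reduction in detail, identifying $\tau_2=-\Id$ via conditions $(2)$--$(3)$ and minimality, then replaying the cubic argument verbatim for $\tau_3$, $\omega$, $\phi$ and the filtration.

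One small point worth tightening: your claim that ``$\tau_2=-\Id$ imposes no extra condition'' on the filtration is not literally true, since $\tau_2$ acts on $D_K$ as $\tau_2|_K\otimes(-\Id)$ and stability of $(\alpha\otimes e_1+1\otimes e_2)K$ under this forces $\tau_2(\alpha)=\alpha$; but this is consistent with the paper's own definition of $\mathcal{M}_6^{\mathrm{na}}$ and does not affect the argument's validity, so the proof goes through as written.
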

\begin{proof}
Similar to the cubic non Abelian case using the natural injection
\[
I({K}/{\Q_3})=\langle\tau_2\rangle\times\langle\tau_3\rangle\hookrightarrow\Aut_{K_0}(D).
\]
\end{proof}

\subsubsection{The Abelian case}
Let ${E}/{\Q_3}$ with $\dst(E)=6$ acquiring good reduction over $K=\Lng(\sqrt{3})$. Then
$\Gal({K}/{\Q_3})=I({K}/{\Q_3})=\langle\tau_3\rangle\times\langle\tau_2\rangle$ is cyclic of order $6$.

Let $\alpha\in\mathcal{M}_6^{\mathrm{a}}=\{ \alpha\in L\vert \tau_3(\alpha)=(\alpha-1)/\alpha\}$
and
\[
    (a,\mu)\in(\{-3\}\times\{1,2\})\sqcup (\{0\}\times\{-1,1\})\sqcup (\{3\}\times\{-2,-1\}).
\]
We denote by $\Dpcg(6;a,\mu;\alpha)$ the filtered $(\phi,\Gal({K}/{\Q_3}))$-module defined by:
\begin{itemize}
	\item $D=\Q_3 e_1\oplus \Q_3 e_2$
	\item $M_B(\tau_3)=\begin{pmatrix} 0 & -1 \\ 1 & -1\end{pmatrix}$
	\item $M_B(\tau_2)=\begin{pmatrix} -1 & 0 \\ 0 & -1\end{pmatrix}$
	\item $a=-3$: $\mu=1$: $M_B(\phi)=\begin{pmatrix} 1 & 1 \\ -1 & 2\end{pmatrix}$,$\ $ $\mu=2$: $M_B(\phi)=\begin{pmatrix} 2 & -1 \\ 1 & 1\end{pmatrix}$
	\item $a=0$: $\mu=1$: $M_B(\phi)=\begin{pmatrix} 1 & -2 \\ 2 & -1\end{pmatrix}$,$\ $ $\mu=-1$: $M_B(\phi)=\begin{pmatrix} -1 & 2 \\ -2 & 1\end{pmatrix}$
	\item $a=3$: $\mu=-1$: $M_B(\phi)=\begin{pmatrix} -1 & -1 \\ 1 & -2\end{pmatrix}$,$\ $ $\mu=-2$: $M_B(\phi)=\begin{pmatrix} -2 & 1 \\ -1 & -1\end{pmatrix}$
	\item $\Fil^1D_K=(\alpha\otimes e_1+1\otimes e_2)K$.
\end{itemize}
For each $\alpha\in\mathcal{M}_6^{\mathrm{a}}$, the filtered $(\phi,\Gal({K}/{\Q_3}))$-module $\Dpcg(6;a,\mu;\alpha)$ satisfies
    conditions $(1)-(4)$ and is admissible.

\begin{proposition}
Let ${E}/{\Q_3}$ be an elliptic curve with $\dst(E)=6$ acquiring good reduction over $K$ and $\D=\Dcris[K](V_3(E))$.
There exists $\alpha\in\mathcal{M}_6^{\mathrm{a}}$
and $(a,\mu)\in(\{-3\}\times\{1,2\})\sqcup (\{0\}\times\{-1,1\})\sqcup (\{3\}\times\{-2,-1\})$ such
that $\D$ and $\Dpcg(6;a,\mu;\alpha)$ are isomorphic as filtered $(\phi,\Gal({K}/{\Q_3}))$-modules.

Moreover if $\alpha,\beta\in\mathcal{M}_6^{\mathrm{a}}$
and $(a,\mu),(b,\nu)\in(\{-3\}\times\{1,2\})\sqcup(\{0\}\times\{-1,1\})\sqcup(\{3\}\times\{-2,-1\})$,
then $\Dpcg(6;a,\mu;\alpha)\simeq \Dpcg(6;b,\nu;\beta)$ if and only if $(a,\mu) =(b,\nu)$.
\end{proposition}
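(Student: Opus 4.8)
The plan is to reduce everything to the cubic Abelian proposition. Indeed, as noted in the introduction to this section, if $E/\Q_3$ has semi-stability defect $3$ then its ramified quadratic twist has semi-stability defect $6$ and conversely, and on the level of filtered modules $\Dpcg(6;a,\mu;\alpha)$ is precisely the twist of $\Dpcg(3;a,\mu;\alpha)$ by the character attached to $\Q_3(\sqrt{3})/\Q_3$; the only genuinely new datum is the action of the order-$2$ generator $\tau_2$ of the inertia subgroup, which turns out to be forced to equal $-\Id$ and is therefore central and harmless.

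First I would let $D$ be the underlying $\Q_3$-vector space of $\D$; here $K_0=\Q_3$, so $\Gal(K/\Q_3)=I(K/\Q_3)=\langle\tau_3\rangle\times\langle\tau_2\rangle$ acts $\Q_3$-linearly on $D$ and the natural map to $\Aut_{\Q_3}(D)$ is injective by minimality of $e(K/\Q_3)$. Exactly as in the quadratic supersingular case, conditions $(2)$ and $(3)$ give $P_{\Char}(\tau_2)(X)\in\Q[X]$ and $\det(\tau_2)=1$; since $\tau_2$ has order $2$ this forces $P_{\Char}(\tau_2)(X)=(X+1)^2$, whence $\tau_2=-\Id$. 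Being central in $\Aut_{\Q_3}(D)$, it automatically commutes with $\phi$ and $\tau_3$ and stabilises every $K$-line of $D_K$, so it plays no further role. I would then rerun the cubic Abelian argument verbatim for $\tau_3$ and $\phi$: since $\zeta_3\notin\Q_3$ one has $P_{\Char}(\tau_3)(X)=P_{\min}(\tau_3)(X)=X^2+X+1$, so there is a $\Q_3$-basis $B=(e_1,e_2)$ with $M_B(\tau_3)=\left(\begin{smallmatrix}0&-1\\1&-1\end{smallmatrix}\right)$; the relation $\phi\tau_3=\tau_3\phi$ together with $P_{\Char}(\phi)(X)=X^2+a_3X+3$ and $\det\phi=3$ forces $M_B(\phi)=\left(\begin{smallmatrix}\lambda&-2\lambda-a_3\\2\lambda+a_3&-\lambda-a_3\end{smallmatrix}\right)$ with $3\lambda^2+3\lambda a_3+a_3^2=3$, an equation solvable in $\Q_3$ only for $a_3\in\{-3,0,3\}$, in which case $\lambda=\mu$ is one of the two listed values for each $a_3$. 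For the filtration, $(1\otimes e_1)K$ and $(1\otimes e_2)K$ are not $\Gal(K/\Q_3)$-stable, while $(\alpha\otimes e_1+1\otimes e_2)K$ is stable precisely when $\tau_3(\alpha)=(\alpha-1)/\alpha$, i.e. $\alpha\in\mathcal{M}_6^{\mathrm{a}}$; hence $\D\simeq\Dpcg(6;a_3,\lambda;\alpha)$.

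For the count of isomorphism classes I would reproduce the cubic computation: a morphism between the underlying $(\phi,\Gal(K/\Q_3))$-modules of $\Dpcg(6;a,\mu;\alpha)$ and $\Dpcg(6;b,\nu;\beta)$ has matrix $\left(\begin{smallmatrix}c&-d\\d&c-d\end{smallmatrix}\right)$ with $(c,d)$ in the kernel of a $2\times2$ matrix whose determinant, up to sign, is $3(\mu-\nu)^2+3(b-a)(\mu-\nu)+(b-a)^2$; a non-zero such $(c,d)$ exists iff $3X^2+3(b-a)X+(b-a)^2$ has a root in $\Q_3$, which happens iff $a=b$ (the root being $0$, so $\mu=\nu$ as well). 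When $(a,\mu)=(b,\nu)$ the $K$-linear extension respects the filtration iff $\alpha=\beta$, via the explicit choice $c=d(\alpha\beta-\beta+1)/(\alpha-\beta)$ with $d\neq0$ when $\alpha\neq\beta$, and since every such map commutes with $\tau_2=-\Id$ automatically, nothing changes relative to the cubic case. There is no real obstacle here: all the content is inherited from the cubic Abelian proposition, and the only step that deserves a word is the determination $\tau_2=-\Id$ from conditions $(2)-(3)$, which is immediate. Alternatively, the whole statement follows formally from the cubic Abelian proposition by observing that twisting by the ramified quadratic character of $\Q_3(\sqrt{3})/\Q_3$ is an equivalence carrying the one list onto the other term by term.
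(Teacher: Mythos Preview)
Your approach is exactly the paper's: the proof there reads in full ``Similar to the cubic Abelian case using the injection $I(K/\Q_3)=\langle\tau_2\rangle\times\langle\tau_3\rangle\hookrightarrow\Aut_{\Q_3}(D)$.'' One small correction is in order, though: it is not true that $\tau_2=-\Id$ on $D$ stabilises every $K$-line of $D_K$, because $\tau_2$ acts on $D_K=K\otimes_{\Q_3}D$ \emph{semilinearly} through its Galois action on $K$. One computes $\tau_2(\alpha\otimes e_1+1\otimes e_2)=-(\tau_2(\alpha)\otimes e_1+1\otimes e_2)$, so $\tau_2$-stability of this line is the nontrivial condition $\tau_2(\alpha)=\alpha$, i.e.\ $\alpha\in K^{\langle\tau_2\rangle}=\Lg$. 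This is precisely the constraint ``$\alpha\in L$'' appearing in the definition of $\mathcal{M}_6^{\mathrm{a}}$, and once you record it the rest of your argument (including the isomorphism count, where commutation with $\tau_2=-\Id$ is indeed automatic on the $\Q_3$-linear level) goes through unchanged.
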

\begin{proof}
Similar to the cubic Abelian case using the following injection
\[
I({K}/{\Q_3})=\langle\tau_2\rangle\times\langle\tau_3\rangle\hookrightarrow\Aut_{\Q_3}(D).
\]
\end{proof}

\subsection{\for{toc}{The dodecic case}\except{toc}{The dodecic case ($e=12$)}}

If an elliptic curve ${E}/{\Q_3}$ has a semi-stability defect $\dst(E)=12$, then its minimal field of good reduction
has Galois closure ${K}/{\Q_3}$ satisfying:

\[
\left\{
    \begin{array}{ll}
        \Gal({K}/{\Q_3})\simeq\Z/3\Z\rtimes D_4\simeq (\Z/3\Z\rtimes \Z/4\Z)\rtimes \Z/2\Z \\
        I({K}/{\Q_3})\simeq\Z/3\Z\rtimes\Z/4\Z.
    \end{array}
\right.
\]
More precisely:
\[
\left\{
    \begin{array}{ll}
        \Gal({K}/{\Q_3})=(\langle \tau_3\rangle\rtimes\langle\tau_4\rangle)\rtimes\langle\omega\rangle \\
        I({K}/{\Q_3})=G(K/K_0)=\langle \tau_3\rangle\rtimes\langle\tau_4\rangle
    \end{array}
\right.
\]
with relations:
\[
\left\{
    \begin{array}{ll}
        \text{ord}(\tau_4)=4, \text{ord}(\tau_3)=3, \text{ord}(\omega)=2\\
        \tau_4\tau_3\tau_4^{-1}=\tau_3^2=\tau_3^{-1}\\
        \omega\tau_4\omega=\tau_4^{-1}\\
        \tau_3\omega=\omega\tau_3 .
    \end{array}
\right.
\]
This follows from the structure of $\Aut_{\Fd_9}(\Et)$, where $\Et$ is the special fibre of $E_K=E\times_{\Q_3}K$. Looking at~\cite{LMFDB} we see that there are exactly $10$ such fields, namely:
\begin{itemize}
    \item $K_1=\Q_3(X^{12} + 3X^4 + 3)$
    \item $K_2=\Q_3(X^{12} - 3X^{11} - 3X^9 + 3X^7 - 3X^4 - 3)$
    \item $K_3=\Q_3(X^{12}+3)$
    \item $K_4=\Q_3(X^{12} + 9X^{10} + 9X^9 - 9X^8 + 6X^6 + 9X^5 - 9X^4 - 3X^3 + 9X^2 - 9X - 12)$
    \item $K_5=\Q_3(X^{12} + 9X^{11} + 9X^{10} + 9X^9 + 9X^8 - 9X^7 - 12X^6 - 9X^2 - 3)$
    \item $K_6=\Q_3(X^{12} + 3X^{10} - 3X^9 - 3X^7 + 3X^6 + 3X^5 + 3X^4 + 3X^3 - 3)$
    \item $K_7=\Q_3(X^{12} - 3X^{11} - 3X^{10} + 3X^9 + 3X^5 - 3X^4 + 3X^3 + 3)$
    \item $K_8=\Q_3(X^{12} - 9X^{11} + 9X^9 - 9X^8 + 9X^7 - 12X^6 + 3X^3 + 9X^2 + 9X - 12)$
    \item $K_9=\Q_3(X^{12} + 9X^{11} + 9X^{10} - 3X^9 - 9X^8 - 9X^7 + 3X^6 + 9X^5 - 9X^4 + 6X^3 - 9X^2 - 9X + 12)$
    \item $K_{10}=\Q_3(X^{12} - 9X^{11} + 6X^9 + 9X^8 + 3X^6 + 9X^5 + 9X^4 + 3X^3 - 9X^2 + 9X + 3)$.
\end{itemize}
Looking at their respective Galois lattices we observe that $K_i^{\mathrm{un}}=K_j^{\mathrm{un}}$
if and only if $i\equiv j\bmod 5\Z$, so that there are in fact 5 fields of good reduction. Furthermore,
every one of these $5$ fields appear as the reduction field of some elliptic curve (see~\cite{Fr-Kr},Thm.17, (7)).
For $i=1,\ldots,10$ we let $L_i$ be the maximal totally ramified sub-extension of $K_i$, so that $K_i=L_i(\zeta_4)$.

Let $\alpha\in\mathcal{M}_{12}^{i,\epsilon}=\{\alpha\in L_i\vert \tau_4(\alpha)=-\alpha\text{ and } \tau_3(\alpha)=(\alpha+{(-1)}^{\epsilon+1}3)/(1+{(-1)}^{\epsilon}\alpha)\}$
for $i\in\{1,\ldots,5\}$ and $\epsilon\in\{ 0,1\}$. We let $K_0=\Q_3(\zeta_4)$ be the maximal unramified extension
of $\Q_3$ in $K_i$ which is independant of $i$. We denote by $\Dpc(12;0;i;\epsilon;\alpha)$ the
filtered $(\phi,\Gal({K_i}/{\Q_3}))$-module defined by:
\begin{itemize}
	\item $D=K_0e_1\oplus K_0e_2$
	\item $M_B(\tau_4)=\begin{pmatrix} \zeta_4 & 0 \\ 0 & \zeta_4^{-1}\end{pmatrix}$
	\item $M_B(\tau_3)=\begin{pmatrix} -\frac{1}{2} & \frac{{(-1)}^{\epsilon+1}3}{2} \\ \frac{{(-1)}^{\epsilon}}{2} & -\frac{1}{2}\end{pmatrix}$
	\item $\phi(e_1)=e_2;\ \phi(e_2)=-3e_1$
	\item $\omega(e_1)=e_1;\ \omega(e_2)=e_2$
	\item $\Fil^1D_{K_i}=(\alpha\otimes e_1 + 1\otimes e_2)K_i$.
\end{itemize}
For each $i\in\{1,\ldots,5\}$, $\epsilon\in\{0,1\}$ and $\alpha\in\mathcal{M}_{12}^{i,\epsilon}$, the filtered $(\phi,\Gal({K_i}/{\Q_3}))$-module $\Dpc(12;0;i;\epsilon;\alpha)$ satisfies conditions $(1)-(4)$ and is admissible.

\begin{proposition}
Let ${E}/{\Q_3}$ be an elliptic curve with $\dst(E)=12$ acquiring good reduction over $K_i$ for some $i\in\{1,\ldots,5\}$
and $\D=\Dcris[K_i](V_3(E))$.
There exists $\epsilon\in\{0,1\}$ and $\alpha\in\mathcal{M}_{12}^{i,\epsilon}$ such that $\D$ and $\Dpc(12;0;i;\epsilon;\alpha)$
are isomorphic as filtered $(\phi,\Gal({K_i}/{\Q_3}))$-modules. Moreover if $\epsilon,\epsilon^{\prime}\in\{0,1\}$
and $\alpha\in\mathcal{M}_{12}^{i,\epsilon}$, $\beta\in \mathcal{M}_{12}^{i,\epsilon^{\prime}}$,
then $\Dpc(12;0;i;\epsilon;\alpha)\simeq \Dpc(12;i;\epsilon^{\prime};\beta)$
if and only if $(\alpha,\epsilon) =(\beta,\epsilon^{\prime})$.
\end{proposition}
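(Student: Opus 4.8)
The plan is to mimic the structure of the cubic non-Abelian case (the proposition for $\Dpcng(3;0;\alpha)$), adding one extra generator $\tau_4$ to the inertia group and exploiting the relation $\tau_4\tau_3\tau_4^{-1}=\tau_3^{-1}$. First I would let $D$ be the underlying $K_0$-vector space of $\D=\Dcris[K_i](V_3(E))$; since $e(K_i/\Q_3)=12$ is minimal, the map $I(K_i/\Q_3)\to\Aut_{K_0}(D)$ is injective, so $\tau_4$ and $\tau_3$ act faithfully and I identify them with their images. From conditions $(2)$ and $(3)$ one gets $\det(\tau_4)=\det(\tau_3)=1$ and $P_{\Char}(\tau_4)(X),P_{\Char}(\tau_3)(X)\in\Q[X]$; since $\tau_4$ has order $4$ and $\tau_3$ has order $3$ and $\zeta_4,\zeta_3\notin K_0$ in the relevant sense, this forces $P_{\Char}(\tau_4)(X)=X^2+1$ and $P_{\Char}(\tau_3)(X)=X^2+X+1$. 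So $\tau_4$ is diagonalizable over $K_0$ with distinct eigenvalues $\zeta_4^{\pm1}$.

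Next I would pick a $K_0$-basis diagonalizing $\tau_4$, say $\tau_4(e_1)=\zeta_4 e_1$, $\tau_4(e_2)=\zeta_4^{-1}e_2$. As in the $e=4$ case, the relation $\omega\tau_4\omega=\tau_4^{-1}$ forces $\omega(e_i)\in K_0 e_i$, and descent along $\langle\omega_i\rangle$ lets me rescale so that $\omega(e_1)=e_1$, $\omega(e_2)=e_2$ while keeping $\tau_4$ diagonal. The relation $\tau_4\tau_3\tau_4^{-1}=\tau_3^{-1}$ then constrains $M_B(\tau_3)$: conjugating the diagonal $\tau_4$-action shows $M_B(\tau_3)$ has the antidiagonal-type shape $\begin{pmatrix}\lambda & \mu'\\ \mu & -\lambda-1\end{pmatrix}$ with off-diagonal entries swapped under conjugation, and combined with $P_{\Char}(\tau_3)(X)=X^2+X+1$, $\sigma$-semilinearity, $\Gal$-equivariance of $\phi$, and $\phi\tau_3=\tau_3\phi$, one is forced (exactly as in the $e=3$ na case, with $\zeta_4$ now genuinely present) into $a_3=0$, $\phi(e_1)=e_2$, $\phi(e_2)=-3e_1$, and $M_B(\tau_3)$ of the form $\begin{pmatrix}-\frac12 & \frac{(-1)^{\epsilon+1}3}{2}\\ \frac{(-1)^{\epsilon}}{2} & -\frac12\end{pmatrix}$ after a further change of basis inside $\ker M$ (fixed by $\Gal(K_i/\Q_3)$, nonempty by descent); the discrete sign $\epsilon\in\{0,1\}$ is what survives because the two square roots giving $\mu$ differ by sign and cannot be absorbed by any further $\Gal$-compatible rescaling. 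This pins down the $(\phi,\Gal(K_i/\Q_3))$-module; the filtration line $\Fil^1 D_{K_i}=(\alpha\otimes e_1+1\otimes e_2)K_i$ is then forced to satisfy, by $\tau_4$-stability, $\tau_4(\alpha)=-\alpha$, and by $\tau_3$-stability, $\tau_3(\alpha)=(\alpha+(-1)^{\epsilon+1}3)/(1+(-1)^{\epsilon}\alpha)$, i.e.\ $\alpha\in\mathcal{M}_{12}^{i,\epsilon}$; stability under $\omega$ is automatic since $\alpha\in L_i$. Hence $\D\simeq\Dpc(12;0;i;\epsilon;\alpha)$.

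For the "moreover" part I would argue as in the $e=4$ and $e=3$ na cases: a nonzero morphism $\psi:\Dpc(12;0;i;\epsilon;\alpha)\to\Dpc(12;0;i;\epsilon';\beta)$ must send $D^{\langle\omega\rangle}$ into $(D')^{\langle\omega'\rangle}$ and commute with the diagonal $\tau_4$, so $\psi(e_i)\in\Q_3 e_i'$; commuting with $\phi$ forces $\psi=a\cdot\Id$ for some $a\in\Q_3^\times$; but then commuting with $\tau_3$ forces the two matrices $M_B(\tau_3)$ to coincide, i.e.\ $\epsilon=\epsilon'$, and finally $\psi_{K_i}(\Fil^1)\subseteq\Fil^1{}'$ forces $\alpha=\beta$. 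So $\Dpc(12;0;i;\epsilon;\alpha)\simeq\Dpc(12;0;i;\epsilon';\beta)$ iff $(\alpha,\epsilon)=(\beta,\epsilon')$.

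The main obstacle I expect is the bookkeeping in the middle step: extracting the normal form for $M_B(\tau_3)$ in the presence of \emph{two} noncommuting inertia generators, and in particular verifying carefully that the only residual invariant after all admissible base changes is the single sign $\epsilon$ (rather than something finer depending on $i$ or on the chosen uniformizer of $L_i$) — this is where the "the five fields are almost indistinguishable" phenomenon has to be made precise, and it requires knowing that $L_i\cap K_0=\Q_3$ and that the relevant square-root ambiguities all collapse to $\pm1$ independently of $i$. Everything else is a routine transcription of the $e=3$ and $e=4$ arguments.
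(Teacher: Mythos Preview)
Your proposal is correct and follows essentially the same route as the paper's proof, which likewise imports the quartic argument to normalize $\tau_4,\omega,\phi$ and then determines $\tau_3$ from its relations with these, leaving only the sign $\epsilon$ and the filtration parameter $\alpha$. Two small points to clean up: first, $\zeta_4\in K_0=\Q_3(\zeta_4)$, so the reason $P_{\Char}(\tau_4)=X^2+1$ is simply $\det=1$, rational trace, order $4$; second, here $\tau_3\omega=\omega\tau_3$ (commuting), not $\tau_3\omega=\omega\tau_3^{-1}$ as in the $e=3$ non-Abelian case, and together with $\tau_4\tau_3\tau_4^{-1}=\tau_3^{-1}$ and $\phi\tau_3=\tau_3\phi$ this already forces $M_B(\tau_3)=\begin{pmatrix}-\tfrac12 & \mp\tfrac32\\ \pm\tfrac12 & -\tfrac12\end{pmatrix}$ directly --- no further $\ker M$ normalization is needed or even possible, since the only basis changes preserving the normalized $\tau_4,\omega,\phi$ are scalars in $\Q_3^\times$.
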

\begin{proof}
Let $D$ be the underlying $K_0$-vector space associated to $\D$. As usual, the inertia subgroup of ${K_i}/{\Q_3}$ injects
in $\Aut_{K_0}(D)$ and we identify $\tau_4$ and $\tau_3$ to their respective image. As in the quartic case we show that there is
a $K_0$-basis $B=(e_1,e_2)$ of $D$ such that:
\[
    \left\{
        \begin{array}{lll}
        M_B(\tau_4)=\begin{pmatrix} \zeta_4 & 0 \\ 0 & \zeta_4^{-1}\end{pmatrix}\\
        \omega(e_1)=e_1,\ \omega(e_2)=e_2 \\
        \phi(e_1)=e_2,\ \phi(e_2)=-3e_1 .
        \end{array}
    \right.
\]
The relations between $\tau_3$ and $\tau_4,\omega$ and $\phi$ implies that there is some $\epsilon^{\prime}\in\{ 0,1\}$ such that
\[
M_B(\tau_3)=\begin{pmatrix} -\frac{1}{2} & \frac{3{(-1)}^{\epsilon^{\prime}+1}}{2}\\ \frac{{(-1)}^{\epsilon^{\prime}}}{2} & -\frac{1}{2}\end{pmatrix}.
\]
A simple calculation shows that the $K_i$-lines of $D_{K_i}=K_i\otimes_{K_0}D$ stable by action of $\Gal({K_i}/{\Q_3})$ are of the
form
\[
(\alpha\otimes e_1 + 1\otimes e_2)K_i
\]
with $\alpha\in L_i$ satisfying the desired conditions.

Let $\epsilon,\epsilon^{\prime}\in\{0,1\}$ and $\alpha\in \mathcal{M}_{12}^{i,\epsilon}$, $\beta\in \mathcal{M}_{12}^{i,\epsilon^{\prime}}$.
Looking only at their underlying $(\phi,\Gal({K_i}/{\Q_3}))$-modules, we see that $\Dpc(12;0;i;\epsilon;\alpha)$
and $\Dpc(12;0;i;\epsilon^{\prime};\beta)$ are isomorphic if and only if $\epsilon=\epsilon^{\prime}$.
Now supposing $\epsilon=\epsilon^{\prime}$ and adding the filtration, we check that a morphism
between $\Dpc(12;0;i;\epsilon;\alpha)$ and $\Dpc(12;0;i;\epsilon;\beta)$ must be of the form $\lambda\Id$
with $\lambda\in\Q_3^{\times}$, so that necessarily $\alpha=\beta$.
\end{proof}

\begin{remark}
As in the cubic Abelian case, observe that $\Dpc(12;0;i;0;\alpha)$ and $\Dpc(12;0;i;1;\alpha)$ are unramified quadratic twists of
each other as $(\phi,\Gal({K_i}/{\Q_3}))$-modules.
\end{remark}


\section{Elliptic curves with given Tate module}\label{S5}

\subsection{Minimal Galois pairs}

Let $K/\Q_p$ be a finite Galois extension with residue field $\Fd_{p^s}$. A Galois pair for $K/\Q_p$ is a triple $(E_0,\Gamma,\nu)$ where $\Et_0/\Fd_p$ is an elliptic curve, $\Gamma$ a subgroup of $\Aut_{\Fd_{p^s}}(\Et)$, and
\[
\nu:\Gal(K/\Q_p)\longrightarrow\Aut_{\Fd_{p^s}}(\Et)\rtimes\Gal({\Fd_{p^s}}/{\Fd_p})
\]
is an antimorphism satisfying:
\begin{enumerate}
    \item $(\mathrm{pr}\circ\nu)(g)=g\bmod I(K/\Q_p)$ for all $g\in\Gal(K/\Q_p)$
    \item $\mathrm{Im}(\nu)=\Gamma\rtimes\Gal({\Fd_{p^s}}/{\Fd_p})$.
\end{enumerate}
Where $\Et=\Et_0\times_{\Fd_p}\Fd_{p^s}$. It is minimal if  $\nu$ is injective and $\Fd_{p^{s}}$ is minimal with respect to $\Gamma$. We refer to~\cite{Vo2}, \S3 for the properties of Galois pairs.

\begin{proposition}\label{minpairs}
    Every (unfiltered) $(\phi,\Gal({K}/{\Q_3}))$-module appearing in Table~\ref{tab:Table 1} comes from a minimal Galois pair for ${K}/{\Q_3}$.
\end{proposition}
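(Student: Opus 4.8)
The plan is to produce, for each unfiltered $(\phi,\Gal(K/\Q_3))$-module $\D$ in Table~\ref{tab:Table 1}, an explicit minimal Galois pair $(\Et_0,\Gamma,\nu)$ for the relevant field $K/\Q_3$ whose associated Weil--Deligne data matches that of $\D$. Recall from the constructions in section~\ref{S4} that each such $\D$ is pinned down by three pieces of data: the characteristic polynomial $X^2+a_3X+3$ of $\phi_0$ (equivalently the isogeny class of the reduced curve $\Et/\Fd_3$ or $\Fd_9$), the action of inertia $\tau_e$ (and $\omega$) on $D$, and — in the tame cases — possibly an unramified quadratic twist parameter. I would organize the proof as a case analysis along the rows of Table~\ref{tab:Table 1}, exactly as the classification in section~\ref{S4} is organized, since the semilinear data we must realize has already been normalized there.

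First I would fix, for each admissible trace $a_3$, an elliptic curve $\Et_0/\Fd_3$ (or over $\Fd_9$ in the cases $e\in\{4,3^{\mathrm{na}},6^{\mathrm{na}},12\}$ where the residue field of $K$ is $\Fd_9$) in the corresponding isogeny class; the existence is guaranteed by Honda--Tate and the bound $|a_3|_\infty\le 2\sqrt 3$, and for the supersingular values $a_3=0,\pm 3$ I would record explicit Weierstrass models and their automorphism groups $\Aut_{\Fd_9}(\Et)$. The crucial structural input — already invoked in the excerpt right after the relations for $e=12$ — is that $\Aut_{\Fd_9}(\Et)$ for a supersingular curve over $\Fd_9$ is (for $p=3$) the group of order $12$ isomorphic to $\Z/3\Z\rtimes\Z/4\Z$, while for $a_3=\pm 3$ the relevant automorphism group is smaller; this is precisely what forces the dichotomy in the table (why $a_3=\pm 3$ appears for $e=1,2,3^{\mathrm a},6^{\mathrm a}$ but not for $e=4,12$, and why the ordinary cases vanish for $e>2$). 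Then for each row I would write down the antimorphism $\nu:\Gal(K/\Q_3)\to\Aut_{\Fd_9}(\Et)\rtimes\Gal(\Fd_9/\Fd_3)$ by sending the distinguished generators $\tau_e,\omega$ (and $\tau_3,\tau_4$ in the $e=12$ case) to automorphisms of $\Et$ realizing the matrices $M_B(\tau_e)$, $M_B(\omega)$ of section~\ref{S4} under the canonical identification of the $\ell$-adic (or crystalline) realization of $\Et$ with $D$; one checks the defining relations of $\Gal(K/\Q_3)$ are respected, that condition $(1)$ (projection to $\Gal(K/\Q_3)/I$) holds, and that condition $(2)$ (image equals $\Gamma\rtimes\Gal(\Fd_9/\Fd_3)$ with $\Gamma=\nu(I(K/\Q_3))$) holds. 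Injectivity of $\nu$ on all of $\Gal(K/\Q_3)$, together with minimality of $\Fd_9$ (resp.\ $\Fd_3$) with respect to $\Gamma$, gives minimality of the pair; injectivity of $\nu\restriction_I$ is exactly the faithfulness of the inertia action already established in each proposition of section~\ref{S4}.

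The main obstacle, and the step that deserves genuine care rather than routine verification, is the case $e=12$: here $I(K/\Q_3)$ is the non-abelian group $\Z/3\Z\rtimes\Z/4\Z$ and one must embed it into $\Aut_{\Fd_9}(\Et)$ as an \emph{antimorphism} compatible with the full list of relations $\tau_4\tau_3\tau_4^{-1}=\tau_3^{-1}$, $\omega\tau_4\omega=\tau_4^{-1}$, $\tau_3\omega=\omega\tau_3$, matching simultaneously the prescribed $M_B(\tau_4)$, $M_B(\tau_3)$ and the parameter $\epsilon\in\{0,1\}$; this requires knowing the precise action of $\Aut_{\Fd_9}(\Et)$ on the Tate module (or Dieudonné module) of the supersingular curve, and checking that both values of $\epsilon$ are attained by genuine descent data — which is where one uses that the five fields $K_1,\dots,K_5$ are all realized (citing~\cite{Fr-Kr}, Thm.~17, and~\cite{LMFDB}). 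The second, milder obstacle is bookkeeping the unramified quadratic twists in the $e=2$ and $e=3^{\mathrm a}$, $6^{\mathrm a}$ rows (the two classes per trace value coming from the two choices of $\mu$): these correspond to composing $\nu$ with the unramified quadratic character, i.e.\ twisting $\omega\mapsto -\omega$ in $\Aut(\Et)$, and one checks this changes $M_B(\phi)$ exactly as in the twist remarks of section~\ref{S4}. Once every row is matched, the proposition follows by inspection of Table~\ref{tab:Table 1}.
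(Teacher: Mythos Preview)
Your proposal is correct and follows essentially the same strategy as the paper: exhibit explicit minimal Galois pairs $(\Et_0,\Gamma,\nu)$ by choosing a curve in the right isogeny class, identifying the appropriate subgroup of $\Aut(\Et)$, and defining $\nu$ on the generators so as to match the matrices of section~\ref{S4}, then invoking the unramified quadratic twist to cover the paired classes. The paper is simply more economical: it treats explicitly only the wild, non-twist cases $e=3$ and $e=12$ (tabulating the pairs in Table~\ref{tab:Table 2}), relegates the tame cases $e\in\{1,2,4\}$ to~\cite{Vo2}, Thm.~4.11, and obtains $e=6$ from $e=3$ by ramified quadratic twist; one small correction is that for $e=12$ the two values of $\epsilon$ are handled by the unramified quadratic twist (as in the cubic abelian case), not by the~\cite{Fr-Kr} citation, which concerns the realization of the five fields $K_i$ rather than the parameter $\epsilon$.
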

    \renewcommand{\arraystretch}{1.3}
    \begin{table}
    \begin{center}
    \begin{tabular}{|c|c|c|l|l|}
    \hline
    Ram.\ ind. & $K$ & Trace & Minimal Galois pair & $(\phi,\Gal({K}/{\Q_3}))$-module\\
    \hline
    $e=3$ & $\Lng(\zeta_4)$ & $a_3=0$ & $\Et_0: y^2=x^3+x$ & $\Dpcng(3;0)$ \\
    & & & $\Gamma=\langle\tau\rangle,\text{ unique }$3$\text{-Sylow of}\Aut_{\Fd_9}(\Et)$ &\\
    & & & $\nu:\tau_3\mapsto\tau, \omega\mapsto f_{\sigma}$ &\\
    \cline{2-5}
    & $\Lg$ & $a_3=-3$ & $\Et_0: y^2=x^3-x+1$ & $\Dpcg(3;-3)$ \\
    & & & $\Gamma=\langle\tau\rangle,\text{ unique }$3$\text{-Sylow of}\Aut_{\Fd_3}(\Et_0)$ &\\
    & & & $\nu:\tau_3\mapsto\tau$ &\\
    \cline{3-5}
    & & $a_3=0$ & $\Et_0: y^2=x^3-x$ & $\Dpcg(3;0)$\\
    & & & $\Gamma=\langle\tau\rangle,\text{ unique }$3$\text{-Sylow of}\Aut_{\Fd_3}(\Et_0)$ &\\
    & & & $\nu:\tau_3\mapsto\tau$ &\\
    \cline{3-5}
    & & $a_3=3$ & $\Et_0: y^2=x^3-x-1$ & $\Dpcg(3;3)$ \\
    & & & $\Gamma=\langle\tau\rangle,\text{ unique }$3$\text{-Sylow of}\Aut_{\Fd_3}(\Et_0)$ &\\
    & & & $\nu:\tau_3\mapsto\tau$ &\\
    \hline
    $e=12$ & $K_i$ & $a_3=0$ & $\Et_0: y^2=x^3+x$ & $\Dpc(12;0;i;1)$ \\
    & & & $\Gamma=\Aut_{\Fd_9}(\Et)$ & \\
    & & & $\nu:\omega\mapsto f_{\sigma}$ & \\
    \hline
    \end{tabular}
    \end{center}
    \caption{\label{tab:Table 2}Minimal Galois pairs for $e=3$ and $e=12$.}
    \end{table}
\begin{proof}
We only treat the wild cases that are not quadratic twists, i.e.\ the cubic and dodecic ones. Let us denote $\Et=\Et_0\times_{\Fd_3}\Fd_9$. The minimal Galois pairs are given in Table~\ref{tab:Table 2} below. It is not hard to see that $\nu$ is injective and the field of definition of $\Gamma$
is minimal. Each of these object gives rise to a $(\phi,\Gal({K}/{\Q_3}))$-module which is necessarily in our list by
construction (they have the right Frobenius and Galois action). Except for the non abelian cubic case,
there are always two isomorphisms classes of $(\phi,\Gal({K}/{\Q_3}))$-modules in our list (see section~\ref{S4}). We have only checked that one of them comes from
a Galois pair but in fact both do since they are unramified quadratic twists of each other.

\end{proof}

\begin{remark}
    When $a_3(\Et_0)=\pm3$ a Galois pair for $K={\Lng(\zeta_4)}/{\Q_3}$ is never minimal because $\Aut_{\Fd_9}(\Et)$ is too small
    compared to $\Gal({K}/{\Q_3})$. It is another way to see why those traces are absent from our list in that case.
\end{remark}

\subsection{A complete classification}

To every $3$-adic potentially crystalline representation $V$ of $\Gal({\Qal_3}/{\Q_3})$ corresponds a weakly admissible
filtered $(\phi,\Gal({K}/{\Q_3}))$-module $\Dcris[K](V)$. This association is functorial in a fully faithful way. In this section,
we will show that every object described in Table~\ref{tab:Table 1} comes from an elliptic curve over $\Q_3$ with potential
good reduction. It turns out that we can use the same tools and ingredients as M. Volkov in her treatment of the
tame case (see~\cite{Vo2}).

\begin{theorem*}
    Let $\D$ be one of the filtered $(\phi,\Gal({K}/{\Q_3}))$-module in Table~\ref{tab:Table 1}. There exists an elliptic curve ${E}/{\Q_3}$ such that $\D\simeq\Dcris[K](V_3(E))$.
\end{theorem*}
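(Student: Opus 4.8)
The plan is to realize each filtered $(\phi,\Gal({K}/{\Q_3}))$-module in Table~\ref{tab:Table 1} as $\Dcris[K](V_3(E))$ for a suitable elliptic curve ${E}/{\Q_3}$, by constructing $E$ from the data of a minimal Galois pair together with a choice of filtration, following the strategy of Volkov in~\cite{Vo2} for the tame case. The starting point is Proposition~\ref{minpairs}: every unfiltered $(\phi,\Gal({K}/{\Q_3}))$-module in the table comes from a minimal Galois pair $(\Et_0,\Gamma,\nu)$ for ${K}/{\Q_3}$. To such a pair one associates, via the theory of lifting abelian varieties with their endomorphisms (Serre--Tate / Messing, or the explicit constructions in~\cite{Vo2},\S3--4), an elliptic curve $\mathcal{E}$ over the ring of integers of an appropriate unramified extension, equipped with descent data; twisting by the cocycle encoded in $\nu$ and descending yields an elliptic curve over $\Q_3$ whose Tate module, after restriction to $G_K$, is crystalline with the prescribed Frobenius and $\Gal({K}/{\Q_3})$-action on $D=\Dcris[K]$. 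This pins down $\D$ up to the choice of filtration $\Fil^1D_K$.

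The second ingredient is to match the Hodge filtration. Here I would use the observation made in section~\ref{S4} (the ``one last point'' in the Strategy section): when $\phi$ has no nontrivial stable subspace, the Hodge--Tate type $(0,1)$ filtrations are parametrized by $\Proj^1(\Q_3)$, and every such filtration is admissible, so the parameter sets $\mathcal{M}_e^{\bullet}$ are nonempty and in bijection with $\Proj^1(\Q_3)$. The filtration attached to $V_3(E)$ is the Hodge filtration coming from the de Rham cohomology / the cotangent space of the Néron model, and as $E$ varies over the quadratic/cubic/higher twists compatible with a fixed Galois pair --- equivalently, over the fibre of the coarse moduli map --- the resulting line $\Fil^1D_K$ sweeps out all admissible values. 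Concretely, one has a family of elliptic curves over $\Q_3$ all inducing the same unfiltered module, and the induced map to the parameter space $\Proj^1(\Q_3)$ (or to $\{0,1\}$ in the cases where admissibility forces finitely many values) is surjective; this is exactly the content of Volkov's argument and transfers verbatim since the filtration computation is insensitive to wildness of the ramification. For the ordinary cases ($e=1,2$) the admissible filtrations are only $\alpha\in\{0,1\}$, and the canonical lift realizes $\alpha=0$ while a suitable non-canonical lift realizes $\alpha=1$.

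The remaining cases are bookkeeping: for $e=1,2$ one uses ordinary/supersingular curves over $\Q_3$ and their ramified quadratic twists directly; for $e=4$ one uses the tame constructions of~\cite{Vo2}; for $e=3,6$ one combines the cubic Galois pairs of Table~\ref{tab:Table 2} (abelian and non-abelian) with their unramified quadratic twists (which account for the two values of $\mu$) and, for $e=6$, a further ramified quadratic twist by $\sqrt{3}$; for $e=12$ one uses the Galois pair with $\Et_0:y^2=x^3+x$ over each of the five fields $K_i$, together with the unramified quadratic twist to pass between $\epsilon=0$ and $\epsilon=1$. In every case, once the unfiltered module is realized, varying within the twist family (over $\Q_3$) realizes every admissible filtration.

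The main obstacle I anticipate is the second step --- proving that the Hodge filtration map from the twist family onto the parameter space $\Proj^1(\Q_3)$ (resp.\ $\{0,1\}$) is genuinely surjective, rather than missing a few values. This requires either an explicit description of $\Fil^1D_K$ in terms of a Weierstrass model and the period pairing (which can be delicate in the wildly ramified cases because the relevant uniformizers $\pi$, $\pi_4$, and the element $\zeta_9+\zeta_9^{-1}$ enter the formulas), or an abstract surjectivity argument comparing the dimension of the twist family with that of $\Proj^1$. I expect the cleanest route is the abstract one of~\cite{Vo2}: the functor $\Dcris[K]$ is fully faithful, so two curves in the twist family give isomorphic modules iff they are isomorphic over $\Q_3$; counting isomorphism classes on both sides (curves vs.\ admissible filtrations) and invoking that both are torsors under $\Proj^1(\Q_3)$ forces surjectivity. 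Verifying that the twist family really is a $\Proj^1(\Q_3)$-torsor --- i.e.\ that there are ``enough'' twists --- is the one point where the wild cases must be checked by hand rather than quoted, and is where I would concentrate the effort.
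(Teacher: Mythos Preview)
Your overall plan---realize the unfiltered module via a minimal Galois pair, then vary to hit every admissible filtration---differs from the paper's, and the second step as you describe it has a genuine gap. The paper does not attempt a surjectivity argument at all: given $\D$ (with its filtration), it first invokes admissibility to obtain a crystalline representation $V$ of $G_K$, then Breuil's theorem (\cite{Br}, Thm.~5.3.2) to realize $V$ as the Tate module of a $p$-divisible group $\mathcal{G}/\mathcal{O}_K$ lifting $\Et(p)$, then Serre--Tate to get an elliptic curve $E/K$ with $V_3(E)\simeq V$. Only then does the minimal Galois pair enter, furnishing descent data on $E$ to produce $E_0/\Q_3$. In other words, Breuil's theorem is what guarantees that every admissible Hodge line is the Hodge filtration of an actual lift; no family-level surjectivity is needed.

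Your proposed substitute---``both sides are torsors under $\Proj^1(\Q_3)$, so counting forces surjectivity''---does not work. The set of elliptic curves over $\Q_3$ with a prescribed unfiltered $(\phi,\Gal(K/\Q_3))$-module is not a priori a $\Proj^1(\Q_3)$-torsor; that is precisely the statement you are trying to prove, and full faithfulness of $\Dcris[K]$ only gives injectivity of the map to filtrations, not surjectivity. Nor can one count, since both sides are infinite. The ``twist family'' in the usual sense (twists by characters, or the fibre of the $j$-map) is finite and cannot cover $\Proj^1(\Q_3)$. What actually parametrizes the relevant curves is the deformation space of $\Et_0$ over $\mathcal{O}_K$, and the statement that the Hodge-filtration map from this space to lines in $D_K$ is surjective is exactly Grothendieck--Messing / Breuil. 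So the obstacle you correctly flag is not resolvable by the method you propose; the missing ingredient is Breuil's theorem, and once you invoke it you are back to the paper's argument.
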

\begin{proof}
    We sketch the proof, following the arguments of~\cite{Vo2} in Thm.5.7. The $\phi_0$-module $D_0$ comes
    from an elliptic curve ${\Et_0}/{\Fd_3}$ with the right Frobenius (via the Dieudonné module of its $p$-divisible group).
    Let $\Et=\Et_0\times_{\Fd_3}k$. Since $\D=\Dcris[K](V)$ for some crystalline representation $V$ of $\Gal(\Qal_3/K)$
    with Hodge-Tate weights $(0,1)$, there exists a $p$-divisible group $\mathcal{G}/\Oc_K$ lifting $\Et(p)/k$ with Tate module
    isomorphic to $V$ (see~\cite{Br},Thm.5.3.2).
    By the Serre-Tate Theorem, the triple $(\mathcal{G},\Et(p),\tilde{\mathcal{G}}\tilde{\longrightarrow}\Et(p))$ determines an elliptic curve $E/K$ with good reduction (i.e.\ an elliptic scheme over $\mathcal{O}_K$) such that $V_3(E)\simeq V$.
    Finally, a minimal Galois pair $(\Et_0,\Gamma,\nu)$ for ${K}/{\Q_3}$ (which always exists in the tame case
    by~\cite{Vo2},Thm.4.11 and in the wild case by Prop.~\ref{minpairs}) furnishes the necessary descent datum
    to obtain an elliptic curve ${E_0}/{\Q_3}$ such that $E=E_0\times_{\Q_3}K$ and $V\simeq V_3(E_0)$.
\end{proof}

\end{document}